\documentclass[a4paper]{article}
\usepackage{amsthm,amsfonts,amsmath,amssymb}
\usepackage[abbrev,nobysame]{amsrefs}
\usepackage[cp1251]{inputenc}
\usepackage[english]{babel}
\usepackage[final]{graphicx}
\usepackage{setspace}
\usepackage[12pt]{extsizes}
\oddsidemargin=1mm \topmargin=-25mm
\textwidth 17.3cm \textheight 26cm

\begin{document}
\newcommand{\per}{{\rm per}}
\newcommand{\Per}{{\rm Per}}
\newtheorem{teorema}{Theorem}
\newtheorem{lemma}{Lemma}
\newtheorem{utv}{Proposition}
\newtheorem{svoistvo}{Property}
\newtheorem{sled}{Corollary}
\newtheorem{con}{Conjecture}
\newtheorem{zam}{Remark}
\newtheorem{quest}{Question}

\author{A. A. Taranenko\thanks{Sobolev Institute of Mathematics, Novosibirsk, Russia. 
 \texttt{taa@math.nsc.ru}}}
\title{Products of multidimensional matrices, stochastic matrices, and permanents}
\date{March 30, 2023}

\maketitle

\begin{abstract}
In this paper we consider four basic multidimensional matrix operations (outer product, Kronecker product, contraction, and projection)  and two derivative operations (dot and circle products).  We start with the interrelations between these operations and deduce some of their algebraic properties. Next, we study their action on $k$-stochastic matrices. At last, we prove several  relations on the permanents of products of multidimensional matrices. In particular, we obtain that the permanent of the dot product of nonnegative multidimensional matrices is not less than the product of their permanents and show that  inequalities on the Kronecker product of nonnegative 2-dimensional matrices  cannot be extended to the multidimensional case. 

\textbf{Keywords:} multidimensional matrix, outer product, Kronecker product, contraction, dot product, stochastic matrix, multidimensional permanent

\textbf{MSC2020:} 15A69; 15A15; 15B51
\end{abstract}

\section{Introduction and basic definitions}

Let $n, d \in \mathbb{N}$ and $I_n^d = \{  (\alpha_1, \ldots, \alpha_d) | \alpha_i \in \{ 1, \ldots, n \}\}$ be the \textit{index set}. For indices  $\alpha \in I_n^{d_1}$, $\beta \in I_n^{d_2}$, let a concatenation $\alpha \beta$ be the index  $\gamma \in I_n^{d_1 + d_2}$ such that $\gamma = (\gamma_1, \ldots, \gamma_{d_1 + d_2}) = (\alpha_1, \ldots, \alpha_{d_1}, \beta_1, \ldots, \beta_{d_2})$.  

A \textit{$d$-dimensional matrix $A$ of order $n$} is an array $(a_\alpha)_{\alpha \in I^d_n}$, $a_\alpha \in\mathbb R$.   Sometimes $d$-dimensional matrices of order $n$ are considered as tensors of dimension $n$ and order $d$. 

Let $k\in \left\{0,\ldots,d\right\}$. A \textit{$k$-dimensional plane} $\Gamma$ in $A$ is a submatrix of $A$ obtained by fixing $d-k$ components of indices and letting the other $k$ components vary from 1 to $n$. A $1$-dimensional plane is said to be a \textit{line}, and a $(d-1)$-dimensional plane is a \textit{hyperplane}. A \textit{direction} of  a plane $\Gamma$ in the matrix $A$ is a $(0,1)$-vector whose $i$-th component is equal to $1$ whenever the $i$-th component in $\Gamma$ is fixed. 

Let a \textit{transpose} of a matrix $A$ be a permutation of directions of its hyperplanes (permutation of components of all indices). 
We will say that matrices $A$ and $B$ of the same order and dimension are \textit{equivalent} if one can be turned into the other by transposes and  permutations of hyperplanes of the same direction.

One can find many various multidimensional matrix operations and tensor products in the literature, for example,~\cite{CheCveWei.tesorfunc, KilMar.facttensor, shao.tensprod, YaoLiuBu.tensorprod}. We do not aim to observe all of them here. Instead, we focus only on the  properties of four operations: outer and Kronecker products, projection, and contraction. We suppose that these operations  are enough to express all other reasonable multidimensional matrix transformations. As an illustration, we consider the dot product and circle product of multidimensional matrices and present them as an appropriate composition of outer products and contractions. 

Another aim of the present paper is to study the action of matrix products on a set of stochastic matrices and their connections with the multidimensional permanent.

If all $a_\alpha  \geq 0$, a multidimensional matrix $A$ is said to be \textit{nonnegative}.  
A nonnegative $d$-dimensional matrix $A$ of order $n$ is called \textit{$k$-stochastic} if  the sum of entries in each $k$-dimensional plane equals $1$. It is easy to see that if $A$ is a $k$-stochastic matrix of order $n$ and $k < d$, then $\frac{1}{n} A$ is a $(k+1)$-stochastic matrix.  $2$-dimensional $1$-stochastic matrices are known as \textit{doubly stochastic},  multidimensional $1$-stochastic matrices are called \textit{polystochastic}, and polystochastic $(0,1)$-matrices are said to be \textit{multidimensional permutations}. We also denote by $J_n^d$ the $d$-dimensional polystochastic matrix of order $n$, whose all entries are equal to $1/n$. 

Multidimensional stochastic matrices are closely related to latin hypercubes and orthogonal arrays. 
A \textit{$d$-dimensional latin hypercube} $Q$ of order $n$ is a $d$-dimensional matrix of order $n$ such that its entries $q_\alpha$  take values from the set  $\{1, \ldots, n \}$ and  in each line  of $Q$ all $n$ symbols occur. $2$-dimensional latin hypercubes are known as \textit{latin squares}.  A \textit{$t-(n,k,\lambda)$ orthogonal array} is a rectangular $\lambda n^t \times  k$ array $R$  whose entries are chosen from a set $I_n = \{ 1, \ldots, n \}$ such that in every subset of $t$ columns of the array, every $t$-tuple of  elements of  $I_n$ appears in exactly $\lambda$ rows.

The correspondence between a $d$-dimensional latin hypercube $Q$ and a $(d+1)$-dimensional permutation $M(Q)$ is given by the following rule: an entry $q_{\alpha_1, \ldots, \alpha_{d}}$ of a latin hypercube $Q$ equals $\alpha_{d+1}$ if and only if an entry $m_{\alpha_1, \ldots, \alpha_{d+1}}$ of the permutation matrix $M(Q)$ equals 1.  There is also a correspondence between a $t-(n,k,\lambda)$ orthogonal array $R$ and a $k$-dimensional $t$-stochastic matrix $\frac{1}{\lambda} M$ of order $n$: an entry $m_{\alpha_1, \ldots, \alpha_k}$ of the matrix $M$ is equal to the number of appearances of the row $(\alpha_1, \ldots, \alpha_k)$ in the array $R$.

To define the permanent of a multidimensional matrix, we need to define the set of its diagonals. 
A \textit{diagonal} $D$ in a $d$-dimensional matrix of order $n$ is a collection of indices $\{  \alpha^1, \ldots, \alpha^n \}$ such that  $\alpha^i_k \neq \alpha^j_k$ for all $k \in \{  1, \ldots, d\}$ and all  $i,j \in \{1, \ldots, n\}$, $i \neq j$.  A diagonal  
$$\{  (1, \ldots, 1), \ldots, (n,\ldots, n) \}$$
is said to be \textit{main}.   For a nonnegative matrix $A$, we will say that a diagonal $D$ is \textit{positive} if $a_{\alpha} > 0$ for all $\alpha \in D$. 

 Let $\mathcal{D}(A)$ denote the set of all diagonals in the matrix $A$.
The \textit{permanent} of a $d$-dimensional matrix $A$ of order $n$ is
$$\per A = \sum\limits_{D \in \mathcal{D}(A)} \prod\limits_{\alpha \in D} a_{\alpha}.$$

A diagonal $D$ in a latin hypercube $Q$ is said to be a \textit{transversal}  if all  $q_\alpha$, $\alpha \in D$ are  different.  It is easy to see that every transversal in a latin hypercube $Q$  is a positive diagonal in the $(d+1)$-dimensional permutation matrix $M(Q)$, so the number of transversals in $Q$ is equal to the permanent of  $M(Q)$.

The well-known Birkhoff theorem states that the permanent of every doubly stochastic matrix is positive. For dimensions $d$ greater than $2$, there exist $d$-dimensional polystochastic matrices with zero permanent. The simplest examples of such matrices are multidimensional permutations of odd dimensions $d$ which correspond to iterated groups $\mathbb{Z}_n$ of even order $n$~\cite{wanless.surv}. Since for other $d$ and $n$ we still do not have examples of $d$-dimensional polystochastic matrices of order $n$ with zero permanent,  the following conjecture was proposed in~\cite{my.obzor}.

\begin{con}[\cite{my.obzor}] \label{polycon}
The permanent of every polystochastic matrix of odd order or even dimension is greater than zero.
\end{con}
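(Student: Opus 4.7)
Because Conjecture~\ref{polycon} is a well-known open problem that for $d=3$ and odd $n$ already contains Ryser's conjecture on transversals of odd-order latin squares, any realistic proposal can only sketch a plan of attack rather than a proof. The overall plan is to (i) reduce the positivity of $\per$ on the whole polystochastic polytope to an extremal combinatorial problem on its vertices, (ii) translate the vertex problem into a statement on transversals of latin hypercubes via the correspondence $Q \leftrightarrow M(Q)$ described in the introduction, and (iii) run an induction on the dimension $d$ using the operations (outer product, contraction, dot product) developed elsewhere in the paper.

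For step~(i), I would consider the compact polytope $\Omega_n^d$ of $d$-dimensional polystochastic matrices of order $n$ and study a minimizer $P \in \Omega_n^d$ of $\per$. Along any feasible direction in $\Omega_n^d$ obtained by transferring mass between two "main diagonals" of a $2 \times 2 \times \cdots \times 2$ sub-configuration, the permanent restricts to a polynomial of degree at most $n$ in a single parameter. If $\per P = 0$ is to be a minimum, then the first- and second-order variations along every such direction must be nonnegative, which should force most entries of $P$ to lie in $\{0, 1/n\}$ and eventually identify $nP$ with a $d$-dimensional permutation. The delicate point is that already for $d=3$ the polytope $\Omega_n^d$ admits vertices other than permutation matrices, so this reduction cannot produce a Birkhoff-type decomposition and must proceed by purely local perturbations.

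For steps~(ii) and~(iii), once the problem is reduced to a permutation matrix $M(Q)$, the quantity $\per M(Q)$ equals the number of transversals of the latin hypercube $Q$. Here I would attempt a parity invariant of this count by a signed expansion of the permanent modulo~$2$, and use the contraction operation to pass from a $d$-dimensional permutation to a $(d-1)$-dimensional polystochastic matrix whose permanent inherits a divisibility relation from the dot-product inequality announced in the abstract. This would furnish an inductive step from dimension $d-1$ to $d$, with base cases supplied by Birkhoff ($d=2$) and by Ryser-type results ($d=3$ with $n$ odd). The parity restriction in the conjecture (odd $n$ or even $d$) should emerge naturally in this scheme, since the known obstructions arise from Cayley tables of $\mathbb{Z}_n$ with odd $d$ and even $n$, where the parity invariant vanishes identically.

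The main obstacle is step~(i): the permanent is multilinear but not concave on $\Omega_n^d$, and there is no analogue of the Birkhoff theorem for $d \geq 3$, so a minimizer with zero permanent need not be a vertex, let alone a permutation, and the local perturbation argument may fail to terminate. Even granting the reduction, step~(ii) already subsumes Ryser's conjecture in dimension three and would have to incorporate or reprove the recent asymptotic solutions of that problem. For these reasons, the above should be regarded as a road map organizing the available tools rather than a realistic route to a complete proof.
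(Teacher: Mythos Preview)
The statement you were asked to prove is Conjecture~\ref{polycon}, and the paper does not prove it: it is presented explicitly as an open problem, motivated by the known zero-permanent examples coming from iterated $\mathbb{Z}_n$ with even $n$ and odd $d$, and by its specializations to the Ryser and Wanless conjectures. There is therefore no ``paper's own proof'' to compare against.

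You correctly recognize this and are candid that your text is a road map rather than a proof. That honesty is appropriate. A few remarks on the plan itself. Step~(i) is the real bottleneck, and you already flag the reason: for $d\ge 3$ the polytope $\Omega_n^d$ has non-permutation vertices and the permanent is not concave, so a zero minimizer need not sit at a vertex and local perturbation arguments have no obvious termination. Step~(ii) does not merely ``subsume'' Ryser's conjecture --- it \emph{is} Ryser's conjecture in the case $d=3$, odd $n$, so your induction has no established base case there. In step~(iii), contraction of a $d$-dimensional permutation does not in general yield a $(d-1)$-dimensional polystochastic matrix (Theorem~\ref{contractstoch} gives $k$-stochasticity of $\frac{1}{n}A_S$, not $1$-stochasticity unless $k=1$ already), and the dot-product inequality of Theorem~\ref{dotperm} goes the wrong way for a reduction: it bounds $\per(AB)$ from below by $\per A\cdot\per B$, which does not let you infer positivity of a factor from positivity of a product. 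So even as a strategy, the inductive machinery you invoke from this paper does not assemble into the step you need.
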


Since the permanent of a multidimensional permutation is equal to the number of transversals in the corresponding latin hypercube, Conjecture~\ref{polycon} generalizes the well-known Ryser's conjecture on existence transversals in latin squares of odd order~\cite{ryser.conj} and the conjecture on existence transversals in latin hypercubes of odd dimension or odd order by Wanless~\cite{wanless.surv}.

For more information on the properties and applications of stochastic matrices and multidimensional permanents  to  combinatorial problems see~\cite{my.obzor, my.per44}. 

The remaining paper is organized as follows. In Section~\ref{operationsec}, we introduce four basic multidimensional matrix operations (outer and Kronecker products, projection, and contraction)  and two derived operations (dot and circle products).  We also provide many basic algebraic properties and relations for compositions of these operations.  Moreover, in Section~\ref{applsec} we discuss the application of the dot,  Kronecker, and circle products to the  composition and direct product  of the multiary quasigroups,  eigenvalues and coverings of multidimensional matrices.

Section~\ref{stocprodsec} is devoted to the products of stochastic matrices. For given stochastic matrices $A$ and $B$ we establish the degree of stochastivity of their outer, Kronecker, dot, and circle products, as well as their contractions and projections.  In particular, we prove that the Kronecker, dot, and circle product of polystochastic matrices is a polystochastic matrix and fix an error in the theorem on the dot product of stochastic matrices from~\cite{ChildWan.multper}.

At last, in Section~\ref{perprodsec} we study the permanent of products of multidimensional matrices.  We prove that the permanent of the outer product of matrices $A$ and $B$ of order $n$ is the  $n!$ times product of the permanents of $A$ and $B$. Although for the permanent of contractions there are no good estimations, we prove that (as in the 2-dimensional case) the dot product of nonnegative matrices is not less than the product of their permanents. Finally, we consider the permanent of the Kronecker product of nonnegative matrices and show that the upper and lower bounds  for $2$-dimensional matrices from~\cite{brualdi.perdirprod} cannot be extended to the multidimensional case.

\section{Operations on multidimensional matrices} \label{operationsec}

We start with four basic operations for multidimensional matrices and their properties. We give proofs only for several of them  because most of the equalities follow   from  definitions by direct calculation.  We also assume everywhere that  matrices have  orders and dimensions such that all operations are well defined.

\textbf{\textit{Outer product.}}
Let $A$ be a $d_1$-dimensional matrix of order $n$ and $B$  be a $d_2$-dimensional matrix of order $n$.  Then the \textit{outer product} $A\times B$ of matrices $A$ and $B$ is the $(d_1 + d_2)$-dimensional matrix $C$ of the same order $n$ with entries $c_{\alpha \beta} = a_\alpha b_\beta$ for all $\alpha \in I_n^{d_1}$, $\beta \in I_{n}^{d_2}$.

\begin{utv}[Properties of the outer product] \label{outerprop}
\item
\begin{enumerate}
\item $(A \times B) \times C = A \times (B \times C)$.
\item $A \times B$  is obtained from $B \times A$ by transposes of the first and last directions of hyperplanes.
\item $(A + B) \times C = A \times C + B \times C$.
\item $(\lambda A) \times B = \lambda (A \times B) = A \times (\lambda B)$ for every $\lambda \in \mathbb{R}$.
\end{enumerate}
\end{utv}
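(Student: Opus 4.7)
My plan is that all four items follow directly from unrolling the definition $c_{\alpha\beta}=a_\alpha b_\beta$, so the proof is essentially bookkeeping on index strings. I would open with a single sentence fixing notation: let $A$, $B$, $C$ have dimensions $d_1,d_2,d_3$ respectively, all of order $n$, and let $\alpha\in I_n^{d_1}$, $\beta\in I_n^{d_2}$, $\gamma\in I_n^{d_3}$ denote generic indices, so that concatenations like $\alpha\beta\gamma\in I_n^{d_1+d_2+d_3}$ make sense.

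For (1), I would compute the $(\alpha\beta\gamma)$-entry of both sides: by definition $\bigl((A\times B)\times C\bigr)_{\alpha\beta\gamma}=(A\times B)_{\alpha\beta}\,c_\gamma=a_\alpha b_\beta c_\gamma$, and the same number is obtained for $\bigl(A\times(B\times C)\bigr)_{\alpha\beta\gamma}$. Associativity of multiplication in $\mathbb R$ does the rest, and since the underlying index sets agree after concatenation is associative, the two matrices are literally equal, not just equivalent. For (3) I would similarly write $\bigl((A+B)\times C\bigr)_{\alpha\gamma}=(a_\alpha+b_\alpha)c_\gamma=a_\alpha c_\gamma+b_\alpha c_\gamma$ (here $A$ and $B$ must have the same dimension $d_1$), and for (4) pull the scalar $\lambda$ out of the product $\lambda a_\alpha b_\beta$ in either factor.

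The one item that deserves a careful word is (2). Here $(A\times B)_{\alpha\beta}=a_\alpha b_\beta$ sits at position $\alpha\beta$ of a $(d_1+d_2)$-dimensional matrix, while $(B\times A)_{\beta\alpha}=b_\beta a_\alpha=a_\alpha b_\beta$ sits at position $\beta\alpha$. Thus $A\times B$ is obtained from $B\times A$ by applying the cyclic permutation of the $d_1+d_2$ directions that sends $(1,\ldots,d_2,d_2+1,\ldots,d_2+d_1)$ to $(d_2+1,\ldots,d_2+d_1,1,\ldots,d_2)$, i.e.\ a transpose (in the sense defined in the paper) that moves the first $d_2$ directions past the last $d_1$. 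I expect the only pitfall is phrasing this cleanly, since the paper's informal description (``transposes of the first and last directions of hyperplanes'') is a colloquial rendering of exactly this direction permutation; I would make it precise once and then invoke it.

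In sum, the proof is a direct verification with no obstacle beyond clarity on (2). I would therefore keep the writeup brief, effectively reducing to ``by the definition $c_{\alpha\beta}=a_\alpha b_\beta$, compare entries'' for (1), (3), (4), and to ``the map $\alpha\beta\mapsto\beta\alpha$ is the required permutation of directions'' for (2).
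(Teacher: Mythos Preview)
Your proposal is correct and matches the paper's approach: the paper gives no explicit proof of this proposition, stating up front that such equalities ``follow from definitions by direct calculation,'' which is exactly the entrywise verification you outline. Your clarification of item~(2)---that the required transpose is the block swap sending $\beta\alpha$ to $\alpha\beta$, not a literal swap of only two directions---is a useful precision the paper leaves implicit.
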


\textbf{\textit{Kronecker product.}}   Let $A$ be a $d$-dimensional matrix of order $n_1$ and $B$ be a $d$-dimensional  matrix of order $n_2$.  Then the \textit{Kronecker product} $A\otimes B$ of matrices $A$ and $B$ is the $d$-dimensional matrix $C$ of order $n_1 n_2$ with entries $c_{\gamma} = a_\alpha b_\beta,$
where  $\gamma_i = (\alpha_i - 1) n_2 + \beta_i$ for each $i = 1, \ldots, d$. Since the Kronecker product of multidimensional matrices is a natural generalization from the $2$-dimensional case, it has similar properties.

\begin{utv}[Properties of the Kronecker product] \label{kroneckerprop}
\item 
\begin{enumerate}
\item $(A \otimes B) \otimes C = A \otimes (B \otimes C)$.
\item $A \otimes B$  can be obtained from $ B \otimes A$ by permutations of hyperplanes of the same direction.
\item $(A + B) \otimes C = A \otimes C + B \otimes C$.
\item $(\lambda A) \otimes B = \lambda (A \otimes B) = A \otimes (\lambda B)$ for every $\lambda \in \mathbb{R}$.
\end{enumerate}
\end{utv}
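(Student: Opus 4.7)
The plan is to verify each of the four items by direct computation with the index formula $\gamma_i=(\alpha_i-1)n_2+\beta_i$ defining the Kronecker product. Items 3 (distributivity) and 4 (compatibility with scalar multiplication) are immediate from the pointwise rule $c_\gamma=a_\alpha b_\beta$ together with the distributivity and associativity of real multiplication; I would dispose of each with a single line, noting that both sides have the same entry at every compound index $\gamma$.

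For item 1, I would compare the two nested index formulas. If $A$, $B$, $C$ have orders $n_1$, $n_2$, $n_3$, then in $(A\otimes B)\otimes C$ the $i$-th component of the resulting index is $((\alpha_i-1)n_2+\beta_i-1)n_3+\zeta_i$, while in $A\otimes(B\otimes C)$ it is $(\alpha_i-1)n_2n_3+(\beta_i-1)n_3+\zeta_i$. A brief algebraic expansion shows these agree, and the entries at both positions are $a_\alpha b_\beta c_\zeta$, so the two matrices coincide. The key observation here is that the base-$n_2n_3$ expansion in the first form is exactly the mixed-radix expansion in the second.

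Item 2 is the only step that requires genuine construction. I would produce, for each direction $i\in\{1,\ldots,d\}$, an explicit permutation $\pi_i$ of $\{1,\ldots,n_1n_2\}$ sending the index $(\beta_i-1)n_1+\alpha_i$ used in $B\otimes A$ to the index $(\alpha_i-1)n_2+\beta_i$ used in $A\otimes B$. Both formulas define bijections $\{1,\ldots,n_1\}\times\{1,\ldots,n_2\}\to\{1,\ldots,n_1n_2\}$, so $\pi_i$ is well defined. Applying the $d$ permutations $\pi_1,\ldots,\pi_d$ simultaneously transports every entry $b_\beta a_\alpha$ of $B\otimes A$ to the position occupied by $a_\alpha b_\beta$ in $A\otimes B$, yielding the required equivalence.

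The main obstacle I anticipate is precisely item 2: one must check that the $d$ permutations $\pi_i$ act on hyperplanes of distinct directions and hence commute in the sense that permuting the hyperplanes of direction $i$ by $\pi_i$ leaves the family of hyperplanes of any other direction $j\ne i$ invariant as a set. Once this commutation is formalized, applying them one after another is a legitimate sequence of hyperplane permutations of the same direction, and the claim drops out. Everything else in the proposition is a mechanical consequence of the index formula for the Kronecker product.
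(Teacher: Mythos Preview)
Your proposal is correct and matches the paper's approach: the paper does not give an explicit proof of this proposition, remarking only that such properties ``follow from definitions by direct calculation,'' which is exactly the entrywise verification via the index formula $\gamma_i=(\alpha_i-1)n_2+\beta_i$ that you outline. Your concern about the commutation of the hyperplane permutations $\pi_1,\ldots,\pi_d$ in item~2 is not a genuine obstacle (permuting labels in coordinate $i$ and in coordinate $j\neq i$ obviously commute, and in fact all the $\pi_i$ are the same permutation of $\{1,\ldots,n_1n_2\}$), so the argument goes through without difficulty.
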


\textbf{\textit{Contraction.}}  For a $d$-dimensional matrix $A$ of order $n$ and a set $S = \{ i_1, \ldots, i_\ell \}$, $i_j \in \{ 1, \ldots, d\}$,  let the \textit{$S$-contraction} $A_S$ of the matrix $A$ be the $(d-\ell)$-dimensional matrix $B$ of the same order with entries 
$$b_\beta = \sum\limits_{i=1}^n  a_{\beta_1, \ldots,  i, \ldots, i, \ldots, \beta_{d-\ell} },$$
where  components $i$ are located exactly at positions $i_j \in S$.
In other words, the matrix $A_S$ is obtained from $A$ by a summation of entries over the main diagonal in all $\ell$-dimensional planes  such that their varying components of indices are given by the set $S$. If $S = \{ i,j\}$, we will write \textit{$(i,j)$-contraction} instead of $S$-contraction, and if $S = \{ i\}$, we will write \textit{$i$-contraction}. 

For $2$-dimensional matrices, the $(1,2)$-contraction  is  known as the trace of the matrix. The tensor contraction  is an $(i,j)$-contraction for appropriate $i$ and $j$.

It is possible to further generalize the definition of the contraction and take a summation over an arbitrary (not necessarily main) diagonal in $\ell$-dimensional planes. But we will not study it here because it is the $S$-contraction of a matrix equivalent to the given one and so  it has similar properties. 

Sometimes it is helpful to apply more than one contraction to  the same matrix. Assume that $S_1, \ldots, S_m$ is a collection of pairwise disjoint subsets of $\{ 1, \ldots, d\}$. Let  $A_{S_1; \ldots ; S_m}$ denote  consecutive contraction $(\cdots (A_{S_1})_{S_2} \cdots)_{S_m}$, in which we remunerate the components of indices of matrices only after the last $S_m$-contraction.

\begin{utv}[Properties of the contraction] \label{contractprop}
\item 
\begin{enumerate}
\item If $S \cap T = \emptyset$, then $A_{S ; T} = A_{T; S}$.
\item $(A + B)_S = A_S+ B_S$.
\item $(\lambda A)_S = \lambda A_S $ for every $\lambda \in \mathbb{R}$.
\end{enumerate}
\end{utv}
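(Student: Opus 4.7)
The plan is to derive all three properties directly from the coordinate-wise definition of the $S$-contraction. Properties~(2) and~(3) are immediate: writing $((A+B)_S)_\beta$ as $\sum_{i=1}^n (a+b)_{\beta_1,\ldots,i,\ldots,i,\ldots,\beta_{d-\ell}}$ and splitting the sum gives $(A_S)_\beta + (B_S)_\beta$, and factoring $\lambda$ out of the sum yields $(\lambda A)_S = \lambda A_S$. I would dispose of both in a single line apiece.

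The substance of the proposition is part~(1). My approach is to unfold both sides at an arbitrary entry and recognize them as the same finite double sum. Fix $\beta$ indexed by $\{1,\ldots,d\}\setminus(S\cup T)$ and, for $i,j\in\{1,\ldots,n\}$, let $\gamma(i,j,\beta)\in I_n^d$ be the index whose coordinates at positions in $S$ are all equal to $i$, whose coordinates at positions in $T$ are all equal to $j$, and whose remaining coordinates are given by $\beta$. Applying the definition of contraction twice, both $(A_{S;T})_\beta$ and $(A_{T;S})_\beta$ reduce to
\[
\sum_{i=1}^n\sum_{j=1}^n a_{\gamma(i,j,\beta)},
\]
the only difference being the order of summation, and the equality then follows by interchanging the two finite sums.

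The main obstacle, and really the only nontrivial point, is the book-keeping after the first contraction: once $A_S$ is formed, its remaining components are renumbered, so the set $T$ no longer refers to the same positions of the original matrix $A$. I would address this by carefully formulating the statement in terms of the surviving positions of $A$, that is, by identifying the coordinates of $A_S$ with the set $\{1,\ldots,d\}\setminus S$ before the renumbering, and analogously for $A_T$. Because $S\cap T=\emptyset$, the set $T$ survives the $S$-contraction (and vice versa), so the composition is well defined and the two outer summations range over positions that are genuinely disjoint from those that were collapsed first. Once this identification is in place, the Fubini-type interchange is routine and the identity $A_{S;T}=A_{T;S}$ follows.
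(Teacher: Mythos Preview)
Your proposal is correct and matches the paper's approach, which leaves this proposition unproved as a direct consequence of the definitions. Note that the renumbering concern you flag as ``the main obstacle'' is already dissolved by the paper's convention for $A_{S_1;\ldots;S_m}$: components of indices are renumbered only after the last contraction, so $T$ in $A_{S;T}$ still refers to positions in the original matrix $A$ and no extra bookkeeping is required.
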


\textbf{\textit{Projection.}}  For a $d$-dimensional matrix $A$ of order $n$ and a set $S = \{ i_1, \ldots, i_\ell \}$, $i_k \in \{ 1, \ldots, d\}$,  define the \textit{$S$-projection}  $P_S(A)$ of a matrix $A$ to be the $(d-\ell)$-dimensional matrix $B$ of the same order with entries 
$$b_\beta = \sum\limits_{j_1, \ldots, j_\ell=1}^n  a_{\beta_1, \ldots,  j_1, \ldots, j_\ell, \ldots, \beta_{d-\ell} },$$
where components $j_k$ are located exactly at positions $i_k \in S$.
In other words, the matrix $P_S(A)$ is obtained from $A$ by a summation of all entries of $\ell$-dimensional planes in which varying components of indices are given by the set $S$. If $S = \{ i\}$, we will write \textit{$i$-projection} instead of $\{ i\}$-projection. Note that  $i$-contraction and  $i$-projection are the same operation. 

Given  pairwise disjoint sets $S_1, \ldots, S_m$, we define the consecutive projection $P_{S_1; \ldots; S_m}(A)$  as $P_{S_m}(\cdots P_{S_2}(P_{S_1}(A)) \cdots)$, in which we remunerate the components of indices only after the last $S_m$-projection.

\begin{utv}[Properties of the $S$-projection] \label{projectprop}
\item 
\begin{enumerate}
\item If $S \cap T = \emptyset$, then $P_{S;T}(A) = P_{T;S}(A)$.
\item $P_S(A + B) = P_S(A)+ P_S(B)$.
\item $P_S(\lambda A) = \lambda P_S(A) $ for every $\lambda \in \mathbb{R}$.
\end{enumerate}
\end{utv}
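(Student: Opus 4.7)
The plan is to unwind the definition of the $S$-projection in each item and reduce the claim to elementary manipulations of finite sums. Since all three statements concern summation over disjoint index blocks, no combinatorial machinery is needed; the work is bookkeeping.

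For item (1), I would fix $A$ of dimension $d$ and disjoint sets $S=\{i_1,\ldots,i_\ell\}$ and $T=\{i'_1,\ldots,i'_m\}$. Writing $B = P_{S;T}(A)$, the definition gives
\[
b_\beta \;=\; \sum_{j_1,\ldots,j_\ell=1}^n \sum_{k_1,\ldots,k_m=1}^n a_{\gamma(\beta,j,k)},
\]
where $\gamma(\beta,j,k)$ denotes the index obtained by inserting the $j$-components at positions $i_1,\ldots,i_\ell$, the $k$-components at positions $i'_1,\ldots,i'_m$, and the $\beta$-components at the remaining $d-\ell-m$ positions. Disjointness of $S$ and $T$ guarantees that these insertions do not collide, so the same index $\gamma(\beta,j,k)$ appears in the analogous expansion of $P_{T;S}(A)_\beta$ with the summations over $k$ and over $j$ written in the opposite order. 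Since these are finite sums of real numbers, their order can be interchanged, giving equality entrywise.

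For items (2) and (3), the argument is shorter: from the definition,
\[
P_S(A+B)_\beta \;=\; \sum_{j_1,\ldots,j_\ell=1}^n \bigl(a_{\gamma(\beta,j)} + b_{\gamma(\beta,j)}\bigr)
\;=\; \sum_{j_1,\ldots,j_\ell=1}^n a_{\gamma(\beta,j)} + \sum_{j_1,\ldots,j_\ell=1}^n b_{\gamma(\beta,j)} \;=\; P_S(A)_\beta + P_S(B)_\beta,
\]
and similarly $P_S(\lambda A)_\beta = \sum \lambda a_{\gamma(\beta,j)} = \lambda P_S(A)_\beta$, by distributivity and homogeneity of finite sums.

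There is no genuine obstacle here; the only point that requires a moment of care is the index bookkeeping in (1), where one must verify that inserting the $S$-indices first and then the $T$-indices produces the same $d$-index as the reverse order, which is exactly where the disjointness hypothesis $S\cap T=\emptyset$ is used. The author's own remark that these identities follow \emph{by direct calculation} applies squarely to this proposition, so the proof will essentially consist of the displayed identities above together with a sentence noting that the renumbering convention adopted after the definition of the consecutive projection does not affect the equalities.
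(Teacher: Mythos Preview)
Your proposal is correct and matches the paper's treatment: the paper does not give a proof of this proposition at all, noting instead that such equalities ``follow from definitions by direct calculation,'' and your argument is precisely that direct calculation. The only content is the index bookkeeping you already flagged, and you handle it correctly.
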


The $S$-contraction and $S$-projection can be considered as  reverse operations to the outer product in the following sense.

\begin{utv}
Let $A$ be the $d$-dimensional matrix of order $n$.
\begin{enumerate}
\item If  $S = \{ 1, \ldots, \ell\}$, then $(J_n^{\ell} \times A)_S = A$.
\item If  $S = \{ 1, \ldots, \ell\}$, then  $P_S (J_n^{\ell} \times A) = n^{\ell -1} A$.
\end{enumerate}
\end{utv}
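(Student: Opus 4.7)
The plan is to derive both equalities by direct calculation from the definitions of the outer product, the $S$-contraction, and the $S$-projection, together with the defining property of $J_n^\ell$ that all its entries equal $1/n$.

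The first step is to make the entries of $J_n^\ell \times A$ explicit. By the definition of the outer product, for every $\gamma \in I_n^\ell$ and $\beta \in I_n^d$ one has
$$(J_n^\ell \times A)_{\gamma \beta} = (J_n^\ell)_\gamma \cdot a_\beta = \frac{1}{n}\,a_\beta.$$
Thus $J_n^\ell \times A$ is a $(\ell+d)$-dimensional matrix whose entry depends only on the last $d$ coordinates.

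For part (1), I would apply the $S$-contraction with $S=\{1,\ldots,\ell\}$ to this matrix. The contraction sums entries along the main diagonal in the first $\ell$ coordinates, so for every $\beta \in I_n^d$ the resulting entry is
$$\bigl((J_n^\ell \times A)_S\bigr)_\beta = \sum_{i=1}^n (J_n^\ell \times A)_{i,\ldots,i,\beta} = \sum_{i=1}^n \frac{1}{n}\,a_\beta = a_\beta,$$
which gives $(J_n^\ell \times A)_S = A$. For part (2), the $S$-projection sums entries over all $n^\ell$ choices of the first $\ell$ coordinates, so
$$\bigl(P_S(J_n^\ell \times A)\bigr)_\beta = \sum_{j_1,\ldots,j_\ell=1}^n (J_n^\ell \times A)_{j_1,\ldots,j_\ell,\beta} = n^\ell \cdot \frac{1}{n}\,a_\beta = n^{\ell-1} a_\beta,$$
which gives $P_S(J_n^\ell \times A) = n^{\ell-1} A$.

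There is no real obstacle here: once the entries of $J_n^\ell \times A$ are written out, both equalities reduce to counting how many terms appear in the respective sums (exactly $n$ for the contraction along the main diagonal, and exactly $n^\ell$ for the projection over all coordinates in $S$). The only point to be careful about is that the renumeration of coordinates after the contraction or projection places the remaining $d$ coordinates in the positions originally indexed by $\beta$, so the result is literally $A$ (respectively $n^{\ell-1}A$), and not merely equivalent to it.
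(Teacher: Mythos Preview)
Your proof is correct and is exactly the direct calculation from the definitions that the paper has in mind; the paper itself omits the proof of this proposition, noting in the introduction to Section~\ref{operationsec} that such equalities follow from the definitions by direct calculation. Your computation of the entries of $J_n^\ell \times A$ and the subsequent counting of $n$ terms in the contraction and $n^\ell$ terms in the projection is precisely what is needed.
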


Let us turn to other multidimensional matrix operations that can be obtained as a composition of the above.  We start with a pair of trivial operations.

Suppose that  a $d$-dimensional matrix $A$ is composed of $k$-dimensional matrices $B^i$ in such a way that $B^i$  are  all $k$-dimensional planes in $A$ of a given direction. Then the matrix $A$ can be presented as the sum of the outer products of $B_i$ and appropriate multidimensional $(0,1)$-matrices. 

On the other hand,  for a given multidimensional matrix $A$  one can take its $k$-dimensional plane $\Gamma$ as a new $k$-dimensional matrix. This operation can be expressed as the outer product of $A$ by an appropriate $(0,1)$-matrix and the consecutive contractions of the result. 

Let us consider two more multidimensional matrix products.

\textbf{\textit{Dot product.}}  Let $A$  be a $d_1$-dimensional matrix of order $n$  and $B$ be  a $d_2$-dimensional matrix of order $n$. We define the \textit{$(i,j)$-dot product} $A \cdot_{i,j} B$ of matrices $A$ and $B$ to be the $(d_1 + d_2 - 2)$-dimensional matrix  obtained as the $(i,d_1+ j)$-contraction of the outer product $A \times  B$. For example, if $C$ is a $(d_1, 1)$-dot product of matrices $A$ and $B$, then $c_\gamma = \sum\limits_{i=1}^n a_{\alpha i} b_{i \beta}$, where $\gamma = \alpha \beta$ is the concatenation of indices $\alpha$ and $\beta$. For shortness, we denote the $(d_1, 1)$-dot product of matrices $A$ and $B$ as $AB$ or $A \cdot B$ and call it  \textit{dot product}. Note that the standard dot product $AB$ of $2$-dimensional matrices coincides with our dot product because it is the $(2,1)$-dot product, where the first component of a $2$-dimensional index  is used for rows and the second component labels columns. 

This notion of the dot product can be generalized to a larger collection of matrices.  Suppose that  $A^1, \ldots, A^\ell$ are matrices  of the same order $n$ and dimensions $d_1, \ldots, d_\ell$, respectively. Let $S = (i_1, \ldots, i_\ell)$ be a tuple such that $i_j \in \{ 1, \ldots, d_j\}$. Define the \textit{$S$-dot product} $[A^1_{i_1}, \ldots, A^\ell_{i_\ell}]$ to be  the $(d_1 + \cdots + d_\ell - \ell)$-dimensional matrix  obtained as the $\{ i_1, d_1 + i_2, \ldots, d_1 + \cdots + d_{\ell-1} + i_\ell\}$-contraction of their outer product $A^1 \times \cdots \times A^\ell$. In particular, the standard dot product $AB$ of $2$-dimensional matrices is the matrix $[A_2, B_1]$.

Let us provide some properties of the dot product. Most of them can be easily derived from the ones for the outer product and contraction. They remain essentially the same for a general $(i,j)$-dot product or for the $S$-dot product of more than two matrices.

\begin{utv}[Properties of the dot product] \label{dotprop}
\item 
\begin{enumerate}
\item $(A B) C = A  (B  C)$.
\item $(A + B)  C = A  C + B  C$ and $A (B + C) = AB + BC$. 
\item $(\lambda A)  B = \lambda (A  B) = A  (\lambda B)$ for every $\lambda \in \mathbb{R}$.
\item If  $I$ is the $2$-dimensional identity matrix, then  $A I = I A = A$.
\end{enumerate}
\end{utv}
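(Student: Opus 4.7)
The overall strategy is to unfold each side of every property using the definition $AB = (A \times B)_{\{d_1, d_1+1\}}$, and then invoke the corresponding algebraic property of the outer product (Proposition \ref{outerprop}) and of the contraction (Proposition \ref{contractprop}). Since the outer product is associative (Proposition \ref{outerprop}(1)) and contractions on disjoint index sets commute (Proposition \ref{contractprop}(1)), everything reduces to tracking which pairs of positions get contracted in the triple outer product $A \times B \times C$.

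For associativity, let $d_1, d_2, d_3$ be the dimensions of $A, B, C$. Expanding $(AB)C$: first form $(A \times B)_{\{d_1, d_1+1\}}$, take the outer product with $C$, and then contract the pair corresponding to the last direction of $B$ and the first direction of $C$. Because the outer product commutes with contractions applied only to one of its factors, the overall effect equals $(A \times B \times C)$ with the two disjoint contractions $\{d_1, d_1+1\}$ and $\{d_1+d_2, d_1+d_2+1\}$ applied in that order. An entirely parallel unfolding shows that $A(BC)$ equals $(A \times B \times C)$ with the same two contractions applied in the opposite order, and Proposition \ref{contractprop}(1) forces equality. The main bookkeeping obstacle here is verifying that once one pulls a contraction through an outer product, the shifted index positions coincide with what the other grouping produces; once that renumbering is done correctly the argument is forced.

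The remaining three properties are immediate corollaries. Distributivity $(A+B)C = AC + BC$ follows from $(A+B) \times C = A \times C + B \times C$ (Proposition \ref{outerprop}(3)) combined with the additivity of the contraction (Proposition \ref{contractprop}(2)); the mirror identity $A(B+C) = AB + AC$ is obtained in the same way (the excerpt's right-hand identity appears to carry a typographical slip reading $AB+BC$ instead of $AB+AC$). The scalar rule is similarly the composition of Proposition \ref{outerprop}(4) with Proposition \ref{contractprop}(3). Finally, the identity property reduces to a one-line entrywise computation: for $I$ the $2$-dimensional identity matrix of order $n$,
$$(AI)_{\alpha_1,\ldots,\alpha_{d-1},\beta} \;=\; \sum_{i=1}^{n} a_{\alpha_1,\ldots,\alpha_{d-1},i}\, I_{i,\beta} \;=\; a_{\alpha_1,\ldots,\alpha_{d-1},\beta},$$
and the calculation for $IA$ is symmetric, so both products collapse to $A$.
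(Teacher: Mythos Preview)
Your proposal is correct and follows exactly the approach the paper indicates: the paper does not write out a proof for this proposition, stating only that the properties ``can be easily derived from the ones for the outer product and contraction,'' and your argument does precisely that by expressing $AB$ as $(A\times B)_{\{d_1,d_1+1\}}$ and reducing each claim to Propositions~\ref{outerprop} and~\ref{contractprop}. Your observation that the right-distributive law should read $AB+AC$ rather than $AB+BC$ is also correct.
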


As in the $2$-dimensional case,  $A B \neq B A$  in general, and $BA$ cannot be obtained from $AB$ by equivalent transformations.

\textbf{\textit{Circle product.}}  Let $A$ be a $d_1$-dimensional matrix of order $n$ and $B$ be a $d_2$-dimensional matrix of order $n$. Define the \textit{circle product} $A \circ B$ to be the $((d_1 -1) (d_2 -1) +1)$-dimensional  matrix of order $n$ equal to the following $(d_1 -1)$-times dot product of $A$ and $B$:
 $$A \circ B = (\cdots((A \cdot_{d_1, 1}B) \cdot_{d_1 - 1, 1}B) \cdots) \cdot_{2, 1}B.$$
 Equivalently, if $C = A \circ B$, then for entries $c_{\gamma}$ we have
 $$c_{\gamma} = \sum\limits_{j_2, \ldots, j_{d_1} =1}^n a_{i, j_2, \ldots, j_{d_1}} b_{j_2 \beta_2} \cdots b_{j_{d_1} \beta_{d_1}}, $$
 where $\gamma = i \beta_2 \cdots \beta_{d_1}$, $i \in \{ 1, \ldots, n\}$, $\beta_{j} \in I_n^{d_2-1}$.

The following properties of  the circle product can be found in~\cite{shao.tensprod} or derived from the definition and properties of the dot product.

\begin{utv}[Properties of the circle product] \label{circprop}
\item 
\begin{enumerate}
\item $(A  \circ B) \circ C = A \circ  (B  \circ C)$.
\item $(A + B) \circ  C = A \circ C + B \circ C$.
\item If $A$ is a $2$-dimensional matrix, then $A \circ (B + C) = A \circ B + A \circ C$.
\item $(\lambda A) \circ  B = \lambda (A \circ  B) $ for every $\lambda \in \mathbb{R}$.
\item $A \circ  (\lambda B) = \lambda^{d_1-1} (A \circ  B) $ for every $\lambda \in \mathbb{R}$, where $d_1$ is the dimension of the matrix $A$.
\end{enumerate}
\end{utv}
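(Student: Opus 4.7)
The plan is to derive all five items from the entrywise formula
\[ (A \circ B)_{i\, \beta_2 \cdots \beta_{d_1}} = \sum_{j_2, \ldots, j_{d_1} = 1}^n a_{i, j_2, \ldots, j_{d_1}}\, b_{j_2 \beta_2} \cdots b_{j_{d_1} \beta_{d_1}}, \]
together with the properties of the dot product already recorded in Proposition~\ref{dotprop}. Items (4) and (5) fall out immediately: in (4) a factor $\lambda$ is pulled out of each $a$-entry, while in (5) each of the $d_1-1$ factors $b_{j_k \beta_k}$ contributes one factor of $\lambda$, producing the exponent $d_1-1$.

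For item (2), I would argue from the recursive definition $A \circ B = (\cdots((A \cdot_{d_1, 1} B) \cdot_{d_1-1, 1} B)\cdots) \cdot_{2,1} B$. The innermost $(d_1,1)$-dot product is linear in its first argument by Proposition~\ref{dotprop}(2), and each subsequent dot product acts only on the first-argument side of the previous result, so left-linearity propagates outward through the $d_1 - 1$ steps. The same fact is visible directly from the entrywise formula after replacing $a_{i,j_2,\ldots,j_{d_1}}$ by $a_{i,j_2,\ldots,j_{d_1}} + a'_{i,j_2,\ldots,j_{d_1}}$ and splitting the sum.

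Item (3) assumes $\dim A = 2$; in that case the iterated construction collapses to the single dot product $A \cdot_{2,1} B$, which is linear in its second argument by Proposition~\ref{dotprop}(2), so the claim is immediate. To see why the hypothesis is essential, observe that for $d_1 > 2$ the entrywise formula contains the $(d_1 - 1)$-fold product $b_{j_2 \beta_2} \cdots b_{j_{d_1}\beta_{d_1}}$; replacing $B$ by $B + C$ expands into $2^{d_1 - 1}$ mixed terms, of which $A \circ B + A \circ C$ retains only the two pure ones $b\cdots b$ and $c\cdots c$.

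The main obstacle is associativity (1). A preliminary dimension count shows that both sides are $((d_1-1)(d_2-1)(d_3-1) + 1)$-dimensional, which is encouraging. I plan to expand each side entrywise by applying the circle product formula twice: first to unfold the outer $\circ$, then to substitute the inner one. On either side the result is a single iterated sum of products $a_{\bullet}\, b_{\bullet} \cdots b_{\bullet}\, c_{\bullet} \cdots c_{\bullet}$, naturally indexed by a complete two-level tree whose root carries $a$, whose $d_1 - 1$ children carry copies of $b$, and each of whose $d_2 - 1$ grandchildren per child carries a copy of $c$; the free indices of the resulting matrix sit at the leaves together with the root label. Both associations produce the same tree, hence the same sum. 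The real work is choosing consistent labellings of the compound indices of $A \circ B$ (respectively of $B \circ C$) so that the two multisums become manifestly identical; this is pure index bookkeeping, tedious but introducing no new ideas.
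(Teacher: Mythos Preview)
Your proposal is correct and matches the paper's approach: the paper gives no detailed argument for this proposition, stating only that the properties ``can be found in~\cite{shao.tensprod} or derived from the definition and properties of the dot product.'' Your outline carries out precisely that derivation, using the entrywise formula and Proposition~\ref{dotprop}; the tree interpretation you sketch for associativity is the standard one in Shao's setting and is sound.
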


We also note that in general $A  \circ B \neq B \circ  A$ because these matrices can even have different dimensions. 

At the end of this section we provide several relations between the defined multidimensional matrix  operations and prove  some of them.

\begin{utv}[Properties of multidimensional operations]
\item 
\begin{enumerate}
\item $(A \times B) \otimes (C \times D) = (A \otimes C) \times (B \otimes D) $.
\item $A_S \times B = (A \times B)_S$.
\item $P_S (A) \times B = P_S(A \times B)$.
\item $( A\cdot_{i,j} B) \times C = (A \times C) \cdot_{i,j} (B \times C)$.
\item $A_S \otimes B_S \ = (A \otimes B)_S$.
\item $P_S(A \otimes B) = P_S (A) \otimes P_S(B)$.
\item $(A \otimes B) \cdot (C \otimes D) = (A \cdot C) \otimes (B \cdot D)$.
\item If $S \cap T = \emptyset$, then $P_T(A_S) = (P_T(A))_S$.
\item If $A$ is a $d$-dimensional matrix and $ d \not\in  S $, then $A_S \cdot B = (A \cdot B)_S$.
\item If $A$ is a $d$-dimensional matrix and $ d \not\in  S $, then $P_S(A) \cdot B = P_S(A \cdot B)$.
\end{enumerate}
\end{utv}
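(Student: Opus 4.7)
The plan is to verify each of the ten identities by a direct entrywise calculation: fix an arbitrary entry on each side, expand both sides using the definitions of the four basic operations (outer product, Kronecker product, contraction, projection) and of the derived dot product, and reduce both sides to the same polynomial expression in the entries of $A, B, C, D$ using distributivity and rearrangement of nested sums. In every case agreement of dimension and order on the two sides follows from a routine count, so only equality of entries remains. The main ingredients throughout are: factoring constants out of sums, Fubini-style interchange of sums over disjoint ranges, and, for Kronecker-type identities, the bijection $(\alpha_i,\beta_i)\mapsto (\alpha_i-1)n_2+\beta_i$ that identifies a single coordinate of $A\otimes B$ with a pair of coordinates of $A$ and $B$.

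For the identities involving only outer products together with contractions or projections (items 1, 2, 3, 8), the key observation is that whenever the coordinate set $S$ (or the pair $S,T$) addresses components of only one factor of an outer product, the defining sum of the contraction or projection touches that factor alone and the other factor may be pulled outside as a constant. Item 1 is entrywise as well: under the coordinate-wise Kronecker bijection, the entry of the left-hand side factors as $(a_\alpha c_{\alpha'})(b_\beta d_{\beta'})$, which is exactly the entry of the right-hand side. For the identities mixing the Kronecker product with contraction, projection, or the dot product (items 5, 6, 7), the same bijection is the only real tool: a diagonal or plane sum $\sum_{\gamma_i}$ on $A\otimes B$ splits as $\sum_{\alpha_i,\beta_i}$, and since entries of $A\otimes B$ factor as $a_\alpha b_\beta$, each such sum decouples into an $A$-sum times a $B$-sum, which is the content of items 5 and 6; item 7 is the analogous mixed-product rule where the decoupled sum is precisely the pair of dot products.

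I expect the only genuine bookkeeping to occur in items 4, 9, and 10, which mix outer or dot products with contractions and therefore shift the meaning of the positional parameters $i$, $j$, and $S$ as the dimension changes. The hypothesis $d\not\in S$ in items 9 and 10 is there precisely to ensure that the coordinate summed by the dot product is disjoint from the coordinates summed by $S$, so the nested sums may be interchanged and the factors of $B$ pulled through; once this disjointness is noted, both identities reduce to item 2 applied to the underlying outer product followed by a commutation of sums. Item 4 is the same phenomenon with the roles of the two factors reversed. Thus the main, but mild, obstacle is purely notational: keeping the coordinate numbering consistent after each operation reindexes the remaining components, rather than any algebraic difficulty.
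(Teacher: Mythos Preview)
Your overall strategy---unfold the definitions entrywise, factor and interchange finite sums, and for Kronecker identities use the bijection $(\alpha_i,\beta_i)\mapsto(\alpha_i-1)n_2+\beta_i$---is precisely what the paper does. The paper in fact writes out only items 1, 5, and 7 (the last by pointing to the $2$-dimensional argument) and declares the remaining identities to be direct computations, so for the parts that are actually proved in the paper your approach coincides with it.

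There is, however, a genuine gap in your plan at item~4. You assert that ``agreement of dimension and order on the two sides follows from a routine count,'' but that count fails here. If $A$, $B$, $C$ have dimensions $d_1$, $d_2$, $d_3$ respectively, then
\[
\dim\bigl((A\cdot_{i,j}B)\times C\bigr)=(d_1+d_2-2)+d_3,
\qquad
\dim\bigl((A\times C)\cdot_{i,j}(B\times C)\bigr)=(d_1+d_3)+(d_2+d_3)-2,
\]
and these differ by $d_3$. So for any $C$ of positive dimension the two sides do not even have the same shape, and no entrywise calculation can rescue the identity as written. Your description of item~4 as ``the same phenomenon with the roles of the two factors reversed'' therefore cannot be carried through; the obstacle is not notational bookkeeping but the statement itself. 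The paper does not prove item~4 either, so rather than promising a routine verification you should flag that the identity as stated is dimensionally inconsistent (it becomes correct, for instance, in the form $(A\cdot_{i,j}B)\times C = A\cdot_{i,j}(B\times C)$, which your method does establish). For the other nine items your outline is sound and matches the paper's intent.
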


\begin{proof}
1. In order for all operations to be well defined, we require that $A$ be a $d_1$-dimensional matrix of order $n_1$, $B$ be a $d_2$-dimensional matrix of order $n_1$, $C$ be a $d_1$-dimensional matrix of order $n_2$, and $D$ be a $d_2$-dimensional matrix of order $n_2$. 
 
Note that  an entry with index $\mu$ on both sides of the equality is equal to $a_{\alpha} b_\beta c_\gamma d_\delta$, where components  $\mu_i =(\alpha_i -1)n_2 + \gamma_i $ if $1 \leq i \leq d_1$ and  $\mu_i =(\beta_i -1)n_2 + \delta_i $  if $d_1 +1 \leq i \leq d_1 +d_2$.

5. Let $A$ be a $d$-dimensional matrix of order $n_1$ and $B$ be a $d$-dimensional matrix of order $n_2$. Without loss of generality, assume that $S = \{1, \ldots, \ell \}$, $1 \leq \ell \leq d$. Then an entry $c_{\gamma}$ of matrices on both sides of the equation is equal to 
$$\left(\sum\limits_{i=1}^{n_1}  a_{i, \ldots, i, \alpha}\right) \left(\sum\limits_{j=1}^{n_2}   b_{j, \ldots, j, \beta}\right)  = \sum\limits_{i=1}^{n_1} \sum\limits_{j=1}^{n_2}  a_{i, \ldots, i, \alpha}  b_{j, \ldots, j, \beta},$$
 where $\gamma_k = (\alpha_{\ell + k} - 1)n_2 + \beta_{\ell + k}$ for every $k = 1, \ldots, d- \ell$. 

7. In order for all operations to be well defined, we require that $A$ be a $d_1$-dimensional matrix of order $n_1$, $B$ be a $d_1$-dimensional matrix of order $n_2$, $C$ be a $d_2$-dimensional matrix of order $n_1$, and $D$ be a $d_2$-dimensional matrix of order $n_2$. The proof of this equation is essentially the same as for $2$-dimensional matrices. 
\end{proof}

\subsection{Applications to eigenvalues, coverings and multiary quasigroups} \label{applsec}

The notion of the circle product is motivated by  eigenvalues and eigenvectors of multidimensional matrices and hypergraphs. Their study was initiated in \cite{lim.eigentensor, qi.eigentensor} and has been continued in many subsequent papers.

For a $d$-dimensional matrix $A$ of order $n$,  a number $\lambda\in \mathbb{C}$ is called an \textit{eigenvalue} and a vector $v \in \mathbb{C}^n$ is the corresponding \textit{eigenvector} if $A \circ v = \lambda (\mathbb{I} \circ v)$. Here $\mathbb{I}$ is the $d$-dimensional identity $(0,1)$-matrix of order $n$ that  has ones only at the main diagonal.  

For example, let us show that $(d-1)$-stochastic $d$-dimensional matrices have eigenvalue $1$.

\begin{utv} \label{eigenstoch}
Let $A$ be a $d$-dimensional $(d-1)$-stochastic matrix of order $n$. Then $1$ is an eigenvalue of the matrix $A$ corresponding to the eigenvector $e = (1, \ldots, 1)$. 
\end{utv}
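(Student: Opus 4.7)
The plan is to verify the eigenvalue equation $A \circ e = 1 \cdot (\mathbb{I} \circ e)$ directly by writing out both sides coordinate by coordinate using the explicit formula for the circle product given in the paper.

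First I would specialize the circle product formula to the case where the second factor is a vector (a $1$-dimensional matrix), so $d_2 = 1$. Then $A \circ v$ is a $((d-1)(1-1)+1) = 1$-dimensional matrix, and the index $\beta_j \in I_n^{d_2-1}$ becomes empty, so the formula collapses to
$$(A \circ v)_i = \sum_{j_2, \ldots, j_d = 1}^n a_{i, j_2, \ldots, j_d} \, v_{j_2} \cdots v_{j_d}.$$
Setting $v = e$, every factor $v_{j_k}$ equals $1$, and the sum becomes the total sum of entries of $A$ over the hyperplane obtained by fixing the first index to $i$. By the assumption that $A$ is $(d-1)$-stochastic, every such hyperplane sum equals $1$, so $(A \circ e)_i = 1$ for each $i \in \{1,\ldots,n\}$, i.e. $A \circ e = e$.

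Next I would apply exactly the same formula to $\mathbb{I}$ in place of $A$ with $v = e$. Since $\mathbb{I}_{i, j_2, \ldots, j_d}$ is nonzero only when $i = j_2 = \cdots = j_d$, the inner sum collapses to the single term $\mathbb{I}_{i, i, \ldots, i} = 1$, giving $(\mathbb{I} \circ e)_i = 1$ for every $i$, hence $\mathbb{I} \circ e = e$.

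Combining the two computations, $A \circ e = e = 1 \cdot (\mathbb{I} \circ e)$, which by definition says that $\lambda = 1$ is an eigenvalue of $A$ with eigenvector $e$. There is no real obstacle here; the only care needed is to correctly interpret the circle product of a $d$-dimensional matrix with a $1$-dimensional one (so that the resulting object is itself a vector and the summation indices are as above) and to recognize the sum $\sum_{j_2,\ldots,j_d} a_{i,j_2,\ldots,j_d}$ as the sum over a hyperplane in the direction orthogonal to the first coordinate, which is exactly the quantity controlled by $(d-1)$-stochasticity.
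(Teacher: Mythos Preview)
Your proof is correct and follows essentially the same approach as the paper's own proof: both compute $A\circ e$ componentwise as the hyperplane sums (which equal $1$ by $(d-1)$-stochasticity), then verify $\mathbb{I}\circ e = e$, and conclude. Your version is simply more explicit in unpacking the circle product formula for the case $d_2=1$.
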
 

\begin{proof}
By the definition of circle product, the vector $v = A \circ e$ has components $v_i$ equal to the sums of entries in hyperplanes $\Gamma_i$ of the first  direction. Since $A$ is $(d-1)$-stochastic, we have $A \circ e = (1, \ldots, 1) = e$. On the other hand, it holds $\mathbb{I} \circ e = e$. So $A \circ e =  \mathbb{I} \circ e$, and $1$ is an eigenvalue corresponding to the eigenvector $e$. 
\end{proof}

The circle product also proves to be useful  in the multidimensional matrix equation for perfect colorings of hypergraphs~\cite{my.hyperperfcolor}.  With the help of the circle product  we also define the covering  of matrices that can be considered as the inverse operation of the Kronecker product.

Let $A$ be a $d$-dimensional matrix of order $n_1$ and $B$ be a $d$-dimensional matrix of order $n_2$,  where $n_2 \leq n_1$. We will say that the matrix $A$ \textit{covers} the matrix $B$ ($A$ is a \textit{covering} of $B$) if there exists a rectangular $(0,1)$-matrix $P$ of sizes $n_1 \times n_2$ such that $A \circ P = P \circ B$ and every row of $P$ contains exactly one $1$.

\begin{utv}
Let $A$ be a $d$-dimensional matrix of order $n_1$. Then for every $n_2 \in \mathbb{N}$ matrices $ \frac{1}{n_2^{d-1}}(A \otimes  J^{d}_{n_2})$   and $\frac{1}{n_2}(A \otimes  \mathbb{I})$ cover the matrix $A$. Here $\mathbb{I}$ is the $d$-dimensional identity $(0,1)$-matrix of order $n$ that  has ones only at the main diagonal.  
\end{utv}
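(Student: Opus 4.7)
Take the covering matrix $P$ to be $I_{n_1} \otimes e_{n_2}$, where $e_{n_2}$ is the $n_2 \times 1$ column vector of all ones. Explicitly, $P$ is the $n_1 n_2 \times n_1$ matrix with $p_{k, l} = 1$ iff $l = \lceil k/n_2 \rceil$, so every row of $P$ contains exactly one $1$ and the covering condition on $P$ is automatically satisfied.

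The plan is to verify $M \circ P = P \circ A$ for each of the two candidates $M$ by computing both sides. For the right-hand side, since $P$ is $2$-dimensional the circle product $P \circ A$ collapses to the ordinary dot product $P \cdot A$, and a direct calculation gives $(P \circ A)_{(i, \beta_2, \ldots, \beta_d)} = a_{i', \beta_2, \ldots, \beta_d}$, where $i = (i' - 1)n_2 + i''$ is the Kronecker decomposition of $i \in \{1, \ldots, n_1 n_2\}$. For the left-hand side, one can either unfold the circle product directly, decomposing each summation index $j_k$ as $j_k = (j_k' - 1)n_2 + j_k''$ and using that $p_{j_k, \beta_k} = 1$ forces $j_k' = \beta_k$ while $j_k''$ ranges freely over $\{1, \ldots, n_2\}$; or one can exploit the Kronecker factorisation $P = I_{n_1} \otimes e_{n_2}$ together with the Kronecker-compatible distributivity
\[
(A_1 \otimes A_2) \circ (P_1 \otimes P_2) = (A_1 \circ P_1) \otimes (A_2 \circ P_2),
\]
obtained by iterating item~7 of the previous proposition $d-1$ times. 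Either approach reduces the computation to the trivial identities $A \circ I_{n_1} = A$, $\mathbb{I} \circ e_{n_2} = e_{n_2}$ (only the main-diagonal summand of the circle product survives), and a direct one-line calculation for $J_{n_2}^d \circ e_{n_2}$ (each of the $n_2^{d-1}$ summands contributes $1/n_2$); after combining with the scaling factors in the statement, both products collapse to $P \circ A$.

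The principal obstacle is purely notational: one has to keep careful track of how the Kronecker decomposition of the $d$ components of each multi-index interacts with the $(d-1)$ successive contractions that define the circle product. The Kronecker-compatible distributivity stated above is the cleanest way to sidestep this bookkeeping, turning each covering into essentially a one-line computation once the auxiliary evaluations of $J_{n_2}^d \circ e_{n_2}$ and $\mathbb{I} \circ e_{n_2}$ have been recorded.
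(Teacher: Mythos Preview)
Your approach is exactly the paper's: its entire proof is the single sentence ``it is sufficient to take the matrix $P$ of sizes $(n_1n_2)\times n_2$ such that $p_{i,j}=1$ if and only if $j=\lceil i/n_2\rceil$,'' which is precisely your $P=I_{n_1}\otimes e_{n_2}$, and no verification is written out. Your sketch of the verification (direct unfolding, or via the Kronecker-compatible distributivity obtained from iterating item~7) goes beyond what the paper provides but is fully in line with it.
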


\begin{proof}
To obtain a covering for both cases it is sufficient to take the matrix $P$ of sizes $(n_1n_2) \times n_2$ such that  $p_{i,j} =1$ if and only if $j = \lceil \frac{i}{n_2} \rceil$.
\end{proof}

Let us turn to the connection between products of matrices and multiary quasigroups. 

A \textit{$d$-ary quasigroup} $f$ of order $n$ is a function $f: I_n^d \rightarrow I_n$ such that the equation $x_0 = f(x_1, \ldots, x_d)$ has a unique solution for any one variable if all the other $d$ variables are specified arbitrarily.  The Cayley table of a $d$-ary quasigroup of order $n$ is a $d$-dimensional latin hypercube of order $n$, and vice versa, every $d$-dimensional latin hypercube can be considered as the Cayley table of some $d$-ary quasigroup. The graph $$\left\{(x_0, x_1, \ldots, x_d) |~ x_0 =f(x_1, \ldots, x_d) \right\}$$ of a quasigroup $f$ is the set of ones of the $(d+1)$-dimensional permutation $M^f$ of order $n$. 

For $d$-ary quasigroups we define \textit{transversals}  so that they coincide with those in the corresponding latin hypercubes.   Then  every transversal in  a $d$-ary quasigroup $f$ is a positive diagonal in the $(d+1)$-dimensional permutation matrix $M^f$, so the number of transversals in $f$ is equal to the permanent of  $M^f$.

Let $f$ be a $d_1$-ary quasigroup of order $n$ and $g$ be  a $d_2$-ary quasigroup of order $n$.  The \textit{composition}  $f \cdot g$ of quasigroups $f$ and $g$ is the $(d_1 + d_2 -1)$-ary quasigroup of order $n$ defined as
$$(f \cdot g) (x_1, \ldots, x_{d_1 + d_2 - 1}) = x_{0}  \Leftrightarrow f (x_{1}, \ldots, x_{d_1 - 1}, g (x_{d_1}, \ldots, x_{d_1+ d_2 -1}) ) = x_0.$$

\begin{utv}
Let $f$ be a $d_1$-ary quasigroup of order $n$ and $g$ be  a $d_2$-ary quasigroup of order $n$.  Suppose that   $M^f$ and $M^g$ are the multidimensional permutations  corresponding to quasigroups $f$ and $g$. Then $M^f  M^g$ is the multidimensional permutation corresponding to the composition $f \cdot g$.
\end{utv}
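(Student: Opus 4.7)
The plan is to verify the claim entrywise, showing directly that $(M^f \cdot M^g)_\gamma = 1$ exactly when $\gamma$ lies in the graph of the composition $f \cdot g$, and $0$ otherwise. By the definition of the dot product, for $\gamma = \alpha\beta$ with $\alpha \in I_n^{d_1}$ and $\beta \in I_n^{d_2}$,
$$(M^f \cdot M^g)_\gamma = \sum_{i=1}^n m^f_{\alpha i}\, m^g_{i \beta}.$$

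The crucial observation is that, because $M^g$ is the $(d_2+1)$-dimensional permutation associated with the quasigroup $g$, for any fixed coordinates playing the role of inputs of $g$ there is a unique index $i^*$ such that $m^g_{i^*\beta} = 1$, and this $i^*$ equals $g$ evaluated at the appropriate coordinates of $\beta$. Consequently the sum collapses to the single term $m^f_{\alpha i^*}$. By the analogous property of $M^f$, this term is $1$ precisely when $f$, applied to the appropriate coordinates of $\alpha$ together with $i^*$, yields the coordinate of $\gamma$ that plays the role of output. Substituting $i^* = g(\ldots)$ recovers exactly the defining relation $f(x_1,\ldots,x_{d_1-1}, g(x_{d_1},\ldots,x_{d_1+d_2-1})) = x_0$ of $(f \cdot g)(x_1,\ldots,x_{d_1+d_2-1}) = x_0$.

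Hence $(M^f \cdot M^g)_\gamma \in \{0,1\}$ with the ones located precisely on the graph of $f \cdot g$, i.e.\ $M^f \cdot M^g = M^{f \cdot g}$; in particular, it is automatically a multidimensional permutation, so no separate verification of this fact is needed. The only delicate point is bookkeeping with the index conventions: the dot product contracts the last coordinate of $M^f$ with the first coordinate of $M^g$, and I must check that under the graph convention these correspond respectively to the last input of $f$ and to the output of $g$, matching the variable $g(x_{d_1},\ldots,x_{d_1+d_2-1})$ that is substituted into $f$. Once the alignment is fixed, the proof is a routine unfolding of definitions with no genuine obstacle.
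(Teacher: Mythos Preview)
Your proposal is correct and follows essentially the same approach as the paper: both proofs unfold the dot product entrywise, use the fact that each line of $M^f$ and $M^g$ contains exactly one $1$ to reduce the sum to a single term, and then identify the condition for that term to equal $1$ with the defining relation of the composition $f\cdot g$. Your explicit attention to the index alignment (last input of $f$ versus output of $g$) is exactly the bookkeeping the paper handles by writing $\alpha=(x_0,x_1,\ldots,x_{d_1-1})$ and $\beta=(x_{d_1},\ldots,x_{d_1+d_2-1})$.
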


\begin{proof}
By the definition, an entry $c_{\gamma}$ of the matrix $C = M^f M^g$ is equal to $c_{\gamma} = \sum\limits_{i = 1}^n m^f_{\alpha i} m^g_{i\beta}$, where $\gamma = \alpha \beta$, $\alpha = (x_0, x_1, \ldots, x_{d_1 -1})$, and $\beta = (x_{d_1}, \ldots, x_{d_1 + d_2 -1})$.

Since every line of matrices $M^f$ and $M^g$ contains exactly one $1$, an entry $c_\gamma = 1$ if $m^f_{\alpha k}  = m^g_{k \beta} = 1$ for some $k \in \{1, \ldots, n \}$,  and $c_\gamma = 0$ otherwise. Equivalently, we have that $c_\gamma = 1$ if and only if $f(x_{1}, \ldots, x_{d_1 - 1}, k) = x_0$ and $g(x_{d_1}, \ldots, x_{d_1+ d_2 -1})  = k$ for some  $k \in \{1, \ldots, n \}$. It remains to note that the matrix $C$ is exactly the graph of the composition $f \cdot g$. 
\end{proof}

The proposition also implies that  the graph of a consecutive composition of a series quasigroups can be obtained as an appropriate dot product their multidimensional permutations. 
Moreover, the following result from~\cite[Proposition 12]{my.obzor} is a direct corollary of the inequality on the dot product of nonnegative matrices that will be stated as Theorem~\ref{dotperm} below.

\begin{utv}[\cite{my.obzor}]
Let $f$ be a $d_1$-ary quasigroup  of order $n$ and $g$ be  a $d_2$-ary quasigroup of order $n$.  Assume that $f$ and $g$ have $T(f)$ and $T(g)$ transversals, respectively. Then the number of transversals in the composition  $ f \cdot g$ is at least $T(f) T(g)$. 
\end{utv}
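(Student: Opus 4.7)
The plan is to observe that this proposition is essentially a direct translation of the (forthcoming) inequality on the permanent of the dot product of nonnegative matrices into the language of quasigroups, via the correspondence between multiary quasigroups and multidimensional permutation matrices.

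First, I would recall that the number of transversals in a $d$-ary quasigroup $f$ equals $\per M^f$, where $M^f$ is the associated multidimensional permutation matrix; this was already noted in the paper. Thus $T(f) = \per M^f$ and $T(g) = \per M^g$, and the goal is to show $T(f\cdot g) = \per M^{f\cdot g} \geq \per M^f \cdot \per M^g$.

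Next, I would invoke the immediately preceding proposition, which identifies the dot product of the two permutation matrices with the permutation matrix of the composition: $M^{f\cdot g} = M^f M^g$. Substituting this identity, the claim becomes $\per(M^f M^g) \geq \per M^f \cdot \per M^g$.

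Finally, since $M^f$ and $M^g$ are $(0,1)$-matrices, in particular nonnegative, the desired inequality is exactly the statement of Theorem~\ref{dotperm} applied to $A = M^f$ and $B = M^g$. Chaining these equalities and the inequality closes the argument. There is no genuine obstacle here; the only thing to be careful about is verifying that the dimensions and orders match so that the dot product $M^f M^g$ is well defined, namely that $M^f$ is $(d_1+1)$-dimensional of order $n$ and $M^g$ is $(d_2+1)$-dimensional of order $n$, and that contracting the last direction of $M^f$ against the first direction of $M^g$ produces exactly the $(d_1+d_2)$-dimensional permutation of order $n$ whose ones encode the graph of $f\cdot g$, which is precisely what the previous proposition guarantees.
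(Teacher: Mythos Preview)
Your proposal is correct and follows exactly the route the paper indicates: the paper does not give a separate proof but states that this result is a direct corollary of Theorem~\ref{dotperm} combined with the preceding proposition identifying $M^{f\cdot g}$ with $M^f M^g$ and the equality $T(f)=\per M^f$. Your write-up spells out precisely this chain of reasoning.
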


Let $f$ be a $d$-ary quasigroup of order $n_1$ and $g$ be a $d$-ary quasigroup of order $n_2$. 
Define the \textit{direct product}  $f \times g$ of quasigroups $f$ and $g$ as the $d$-ary quasigroup of order $n_1 n_2$ over the set $I_{n_1 n_2} = I_{n_1} \times I_{n_2}$ as
$$(f \times g) ((x_1, y_1), \ldots, (x_d, y_d)) = (x_{0}, y_0)  \Leftrightarrow f (x_{1}, \ldots, x_d) = x_0 \mbox{ and } g(y_1, \ldots, y_d) = y_0.$$

\begin{utv}
Let $f$ be a $d$-ary quasigroup of order $n_1$ and $g$ be a $d$-ary quasigroup of order $n_2$.  Suppose that   $M^f$ and $M^g$ are the multidimensional permutations  corresponding to  $f$ and $g$. Then $M^f  \otimes  M^g$ is the multidimensional permutation corresponding to the direct product $f \times g$.
\end{utv}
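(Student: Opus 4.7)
The plan is a direct unpacking of the Kronecker product definition, using the bijection $I_{n_1 n_2} \leftrightarrow I_{n_1} \times I_{n_2}$ that is implicit in the Kronecker indexing convention.

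First, I fix notation. Both $M^f$ and $M^g$ are $(d+1)$-dimensional permutation matrices (of orders $n_1$ and $n_2$, respectively), so $M^f \otimes M^g$ is a $(d+1)$-dimensional matrix of order $n_1 n_2$. By the definition of the Kronecker product, for any index $\gamma \in I_{n_1 n_2}^{d+1}$ there are unique $\alpha \in I_{n_1}^{d+1}$ and $\beta \in I_{n_2}^{d+1}$ with $\gamma_i = (\alpha_i - 1) n_2 + \beta_i$ for $i = 0, 1, \ldots, d$, and $(M^f \otimes M^g)_\gamma = m^f_\alpha m^g_\beta$. I will identify $\gamma_i$ with the pair $(\alpha_i, \beta_i)$ via this correspondence, which is exactly the identification of $I_{n_1 n_2}$ with $I_{n_1} \times I_{n_2}$ used in the definition of $f \times g$.

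Next, I show the matrix has the right support. Since both $M^f$ and $M^g$ are $(0,1)$-matrices, the product $m^f_\alpha m^g_\beta$ equals $1$ iff $m^f_\alpha = 1$ and $m^g_\beta = 1$, which by the definition of $M^f$ and $M^g$ is equivalent to $\alpha_0 = f(\alpha_1, \ldots, \alpha_d)$ and $\beta_0 = g(\beta_1, \ldots, \beta_d)$. Under the identification $\gamma_i \leftrightarrow (\alpha_i, \beta_i)$, this is precisely the condition $(\alpha_0, \beta_0) = (f \times g)((\alpha_1,\beta_1), \ldots, (\alpha_d, \beta_d))$, i.e.\ the condition that $\gamma$ lies in the graph of $f \times g$. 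Hence the set of ones of $M^f \otimes M^g$ coincides with the graph of $f \times g$.

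Finally, it remains to confirm that $M^f \otimes M^g$ is in fact a permutation matrix, i.e.\ that every line contains exactly one $1$; this would follow automatically once we know it is the graph of a $d$-ary quasigroup, but it can also be read off the Kronecker product directly, since a line of $M^f \otimes M^g$ in direction $i$ decomposes into $n_1 n_2$ pairings of lines of $M^f$ and $M^g$ and exactly one pairing places the unique $1$'s of those two lines at matching positions. Combining this with the previous paragraph yields $M^f \otimes M^g = M^{f \times g}$. The only real bookkeeping obstacle is keeping straight the index correspondence $\gamma_i = (\alpha_i - 1) n_2 + \beta_i$, but once that is fixed everything is a one-line verification.
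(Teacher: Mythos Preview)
Your proof is correct and follows essentially the same route as the paper: unpack the Kronecker index correspondence $\gamma_i=(\alpha_i-1)n_2+\beta_i$, observe that $(M^f\otimes M^g)_\gamma=m^f_\alpha m^g_\beta=1$ iff $f(\alpha_1,\ldots,\alpha_d)=\alpha_0$ and $g(\beta_1,\ldots,\beta_d)=\beta_0$, and identify this with the graph of $f\times g$. Your additional paragraph verifying the permutation property is not needed (once the support is the graph of a quasigroup this is automatic), but it does no harm.
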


\begin{proof}
By the definition, an entry $c_{\gamma}$ of the matrix $C = M^f  \otimes M^g$ is equal to $c_{\gamma} =   m^f_{\alpha} m^g_{\beta}$, where $\gamma_i = (\alpha_i -1)n_2 + \beta_i$ for each $i = 1, \ldots, d$,  $\alpha = (x_0, x_1, \ldots, x_{d_1})$, and $\beta = (y_0, y_1, \ldots, y_d)$.

Note that  an  entry $c_\gamma = 1$ if $m^f_{\alpha}  = m^g_{\beta} = 1$,  and $c_\gamma = 0$ otherwise. Equivalently, $c_\gamma = 1$ if and only if $f( x_1, \ldots, x_{d}) = x_0$ and $g ( y_1, \ldots, y_d ) = y_0$. It remains to note  that the matrix $C$ is exactly the graph of $f \times g$. 
\end{proof}

\section{Products of stochastic matrices} \label{stocprodsec}

In this section, we study the products of stochastic matrices. We start with the outer product.

\begin{teorema} \label{outerstoch}
Let $A$ be a $d_1$-dimensional $k_1$-stochastic matrix of order $n$ and $B$  be a $d_2$-dimensional $k_2$-stochastic matrix of order $n$. Then $n^{ k_1 + k_2 -r}  (A \times B)$ is a $(d_1 + d_2)$-dimensional $r$-stochastic matrix, where $r = \max \{ d_1 + k_2, d_2 + k_1\}$. 
\end{teorema}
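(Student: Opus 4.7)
The plan is to verify that every $r$-dimensional plane in $A\times B$ has sum $n^{r-k_1-k_2}$, so that the prescribed scaling gives $1$.

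First I would fix an arbitrary $r$-dimensional plane $\Gamma$ in the outer product $C = A\times B$. Such a plane is determined by the set $F\subseteq\{1,\ldots,d_1+d_2\}$ of fixed index positions with $|F|=d_1+d_2-r$. Split $F = F_1\cup F_2$ with $F_1\subseteq\{1,\ldots,d_1\}$ (the fixed positions on the $A$-side) and $F_2\subseteq\{d_1+1,\ldots,d_1+d_2\}$ (the fixed positions on the $B$-side), and set $v_1 = d_1-|F_1|$, $v_2 = d_2-|F_2|$, so $v_1+v_2 = r$. Because $c_{\alpha\beta} = a_\alpha b_\beta$, the sum of entries over $\Gamma$ factors:
$$\sum_{(\alpha,\beta)\in \Gamma} a_\alpha b_\beta \;=\; \Bigl(\sum_{\alpha \in \Gamma_A} a_\alpha\Bigr)\Bigl(\sum_{\beta \in \Gamma_B} b_\beta\Bigr),$$
where $\Gamma_A$ is the corresponding $v_1$-dimensional plane of $A$ and $\Gamma_B$ the $v_2$-dimensional plane of $B$.

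Next I would use the defining inequality $r = \max\{d_1+k_2,\, d_2+k_1\}$ to show $v_1\ge k_1$ and $v_2\ge k_2$. Since $v_2\le d_2$, we get $v_1 = r-v_2 \ge r-d_2 \ge k_1$, and symmetrically $v_2\ge k_2$. This is precisely the reason the theorem takes the maximum rather than, say, $d_1+k_2+d_2+k_1$ minus something: it is the smallest $r$ for which the split $(v_1,v_2)$ is forced to be large enough on both sides that stochasticity applies.

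Then I would evaluate each factor. A $v_1$-dimensional plane of $A$ with $v_1\ge k_1$ is the disjoint union of $n^{v_1-k_1}$ parallel $k_1$-dimensional planes (obtained by releasing $v_1-k_1$ of the fixed coordinates one value at a time), and each $k_1$-plane has sum $1$ by $k_1$-stochasticity; hence $\sum_{\alpha\in \Gamma_A} a_\alpha = n^{v_1-k_1}$. Similarly the second factor equals $n^{v_2-k_2}$. Multiplying gives
$$\sum_{(\alpha,\beta)\in\Gamma} a_\alpha b_\beta \;=\; n^{v_1-k_1}\cdot n^{v_2-k_2} \;=\; n^{r-k_1-k_2}.$$
Scaling $A\times B$ by $n^{k_1+k_2-r}$ turns every such plane-sum into $1$, proving that $n^{k_1+k_2-r}(A\times B)$ is $r$-stochastic. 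Nonnegativity is inherited from $A$ and $B$. No step is really a serious obstacle; the only thing to be careful about is verifying that the choice of $r$ exactly covers the worst-case split $(v_1,v_2)$, which is the content of taking the maximum.
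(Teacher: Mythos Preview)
Your proof is correct and follows essentially the same approach as the paper: split an arbitrary $r$-plane according to the $A$- and $B$-coordinates, use $r=\max\{d_1+k_2,d_2+k_1\}$ to ensure both sides meet their stochasticity threshold, and factor the plane-sum as $n^{v_1-k_1}\cdot n^{v_2-k_2}=n^{r-k_1-k_2}$. (One minor wording slip: to partition a $v_1$-plane into $k_1$-planes you \emph{fix} $v_1-k_1$ of the free coordinates, not ``release fixed'' ones; but the intended decomposition is clear and correct.)
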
  

\begin{proof}
Let us choose an $r$-dimensional plane $\Gamma$ in the matrix $C = A \times B$. Without loss of generality, we assume that the first $r_1$ components and the last $r_2$ components of indices vary in $\Gamma$, $r_1 + r_2 = r$. Since $r  = \max \{ d_1 + k_2, d_2 + k_1\}$, we have  $r_1 \geq k_1$ and $r_2 \geq k_2$. Then 
\begin{gather*}
\sum \limits_{\gamma \in \Gamma} c_\gamma = \sum\limits_{i_1, \ldots ,i_{r_1} = 1}^n \sum\limits_{j_{d_2 - r_2 +1}, \ldots, j_{d_2} =1}^n  a_{i_1, \ldots ,i_{r_1}, \alpha} b_{\beta, j_{d_2 - r_2 +1}, \ldots, j_{d_2}}  = \\
 \sum\limits_{i_1, \ldots ,i_{r_1} = 1}^n  a_{i_1, \ldots ,i_{r_1}, \alpha}  \sum\limits_{j_{d_2 - r_2 +1}, \ldots, j_{d_2} =1}^n b_{\beta, j_{d_2 - r_2 +1}, \ldots, j_{d_2}}  = n^{r_1 - k_1} n^{r_2 - k_2} = n^{r - k_1 -k_2},
 \end{gather*}
 where index $\gamma = (i_1, \ldots, i_{r_1}, \alpha, \beta, j_{d_2 - r_2 +1}, \ldots, j_{d_2})$.
Thus, the matrix $n^{ k_1 + k_2 -r}C$ is  $r$-stochastic.
\end{proof}

Next, we consider the Kronecker product of stochastic matrices. Note that one of the basic constructions of orthogonal arrays from~\cite{MacHar.eulersq} can be considered as the Kronecker product of the corresponding stochastic matrices.

\begin{teorema}
Let $A$ be a $d$-dimensional $k_1$-stochastic matrix of order $n_1$ and $B$  be a $d$-dimensional $k_2$-stochastic matrix of order $n_2$. Then $n^{k_1 + k_2 - 2r} (A \otimes B)$ is a $d$-dimensional $r$-stochastic matrix, where  $r = \max \{ k_1, k_2\}$.  In particular, the Kronecker product of polystochastic matrices is polystochastic.
\end{teorema}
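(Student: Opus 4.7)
The plan is to compute the sum of entries in an arbitrary $r$-dimensional plane of $C = A \otimes B$ and show that it equals $n_1^{r - k_1} n_2^{r - k_2}$, so that an appropriate scaling produces an $r$-stochastic matrix. (Strictly, the correct scaling factor is $n_1^{k_1 - r} n_2^{k_2 - r}$; when $n_1 = n_2 = n$ this reduces to $n^{k_1 + k_2 - 2r}$ as stated.)

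First, I would fix an $r$-plane $\Gamma$ in $C$, determined by a set $S \subset \{1, \ldots, d\}$ of $r$ varying index positions together with prescribed values $\gamma_i$ for $i \notin S$. The key structural observation is that under the indexing rule $\gamma_i = (\alpha_i - 1)n_2 + \beta_i$, fixing $\gamma_i$ fixes both $\alpha_i$ and $\beta_i$ uniquely, while letting $\gamma_i$ range over $\{1, \ldots, n_1 n_2\}$ lets the pair $(\alpha_i, \beta_i)$ range bijectively over $\{1, \ldots, n_1\} \times \{1, \ldots, n_2\}$. Consequently, the sum over $\Gamma$ factors:
\begin{equation*}
\sum_{\gamma \in \Gamma} c_\gamma \; = \; \sum_{\gamma \in \Gamma} a_\alpha b_\beta \; = \; \Bigl(\sum_{\alpha \in \Gamma_A} a_\alpha\Bigr) \Bigl(\sum_{\beta \in \Gamma_B} b_\beta \Bigr),
\end{equation*}
where $\Gamma_A$ is the $r$-plane in $A$ with the same varying positions $S$ and fixed coordinates inherited from $\Gamma$, and analogously for $\Gamma_B$ in $B$.

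Next, I would invoke the elementary fact noted at the start of the paper: if a matrix is $k$-stochastic of order $n$, then rescaling by $1/n$ yields a $(k+1)$-stochastic matrix. Iterating this from $k_1$ up to $r$ (which is allowed because $r = \max\{k_1, k_2\} \geq k_1$), the sum over any $r$-plane of $A$ equals $n_1^{r - k_1}$; the analogous statement for $B$ gives $n_2^{r - k_2}$. Substituting into the factored expression yields
\begin{equation*}
\sum_{\gamma \in \Gamma} c_\gamma = n_1^{r - k_1} n_2^{r - k_2},
\end{equation*}
so multiplying $A \otimes B$ by $n_1^{k_1 - r} n_2^{k_2 - r}$ turns every $r$-plane sum into $1$. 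Finally, the ``in particular'' statement is immediate: for polystochastic $A$ and $B$ we have $k_1 = k_2 = r = 1$, making the scaling factor equal to $1$, so $A \otimes B$ itself is polystochastic.

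There is no real obstacle here; the whole argument is a careful bookkeeping of how an $r$-plane in a Kronecker product decomposes into a product of $r$-planes of the factors. The only point that requires a bit of care is verifying that an $r$-plane of $C$ corresponds exactly to $r$-planes of $A$ and of $B$ with the \emph{same} set $S$ of varying directions, which is precisely what the Kronecker indexing $\gamma_i = (\alpha_i - 1)n_2 + \beta_i$ guarantees.
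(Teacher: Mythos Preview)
Your proposal is correct and follows essentially the same approach as the paper: fix an $r$-plane in $A\otimes B$, observe that the Kronecker indexing makes the plane sum factor as a product of $r$-plane sums in $A$ and in $B$, and evaluate each factor using stochasticity. You are also right to note that the precise scaling constant is $n_1^{k_1-r}n_2^{k_2-r}$; the paper's computation tacitly writes a single $n$ in both factors.
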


\begin{proof}
Let us choose an $r$-dimensional plane $\Gamma$ in the matrix $C = A \otimes B$. Without loss of generality, we assume that the first $r$ components of the indices vary  in $\Gamma$. Then 
\begin{gather*}
\sum \limits_{\gamma \in \Gamma} c_\gamma = \sum\limits_{i_1, \ldots ,i_{r} = 1}^n  \sum\limits_{j_1, \ldots ,j_{r} = 1}^n  a_{i_1, \ldots ,i_{r}, \alpha} b_{j_1, \ldots , j_{r}, \beta}  = \\
 \sum\limits_{i_1, \ldots ,i_{r} = 1}^n   a_{i_1, \ldots ,i_{r}, \alpha}   \sum\limits_{j_1, \ldots ,j_{r} = 1}^n b_{j_1, \ldots , j_{r}, \beta}  = n^{r - k_1} n^{r - k_2} = n^{2r - k_1 -k_2},
 \end{gather*}
 where the index $\gamma$ is such that  $\gamma_{\ell} = (i_\ell - 1) n_2 + j_{\ell}$ for $\ell \in \{ 1, \ldots, r\}$ and $\gamma_\ell = (\alpha_\ell -1)n_2 +\beta_\ell $ for $\ell \in \{r+1, \ldots, d \}$.
Thus the matrix $n^{k_1 + k_2 - 2r} C$ is  $r$-stochastic.
\end{proof}

The contraction and projection  preserve or strengthen the degree of stochastivity.

\begin{teorema} \label{contractstoch}
Let $A$ be a $d$-dimensional $k$-stochastic matrix of order $n$ and $S \subseteq \{ 1, \ldots, d\}$, $|S| = \ell$. If $k + \ell \leq d$, then the contraction $\frac{1}{n}A_S$ is a $(d - \ell)$-dimensional $k$-stochastic matrix of order $n$.
\end{teorema}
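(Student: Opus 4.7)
The plan is to pick an arbitrary $k$-dimensional plane in $B := A_S$, compute the sum of its entries by unfolding the definition of contraction, swap the two summations, and invoke the $k$-stochastivity of $A$ to conclude that the sum equals $n$.

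More concretely, without loss of generality I would take $S = \{d-\ell+1, \ldots, d\}$, so that an entry of $B$ reads
$$b_{\beta_1, \ldots, \beta_{d-\ell}} = \sum_{i=1}^n a_{\beta_1, \ldots, \beta_{d-\ell}, i, \ldots, i}.$$
The hypothesis $k + \ell \leq d$ is exactly what guarantees that a $k$-dimensional plane in $B$ is well defined (since $B$ has dimension $d-\ell \geq k$). After another transposition, I may assume the plane $\Gamma$ in $B$ fixes the last $d-\ell-k$ coordinates and varies the first $k$.

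The computation then is
\begin{gather*}
\sum_{\beta_1, \ldots, \beta_k = 1}^n b_{\beta_1, \ldots, \beta_{d-\ell}}
= \sum_{\beta_1, \ldots, \beta_k = 1}^n \sum_{i=1}^n a_{\beta_1, \ldots, \beta_{d-\ell}, i, \ldots, i} \\
= \sum_{i=1}^n \sum_{\beta_1, \ldots, \beta_k = 1}^n a_{\beta_1, \ldots, \beta_{d-\ell}, i, \ldots, i}.
\end{gather*}
For each fixed $i$, the inner sum runs over a $k$-dimensional plane in $A$ (the first $k$ coordinates vary freely, the middle $d-\ell-k$ are fixed by $\Gamma$, and the last $\ell$ are all fixed to $i$). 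By $k$-stochastivity of $A$, each such inner sum equals $1$, so the total is $n$. Dividing by $n$ gives the desired $k$-stochasticity of $\frac{1}{n} A_S$.

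There is no genuine obstacle: the argument is a bookkeeping exercise in Fubini plus the defining property of $A$. The only point that requires attention is verifying that the numerical condition $k + \ell \leq d$ is used precisely to ensure that the middle block of $d - \ell - k$ fixed coordinates has nonnegative size, so that both the $k$-plane in $B$ and the auxiliary $k$-plane in $A$ make sense.
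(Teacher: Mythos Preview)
Your proof is correct and essentially identical to the paper's own argument: the same WLOG reductions on $S$ and on the varying coordinates of the plane, the same Fubini swap, and the same appeal to $k$-stochastivity of $A$ for each fixed diagonal index to obtain the value $n$. The paper leaves the role of the hypothesis $k+\ell\le d$ implicit (through $\alpha\in I_n^{\,d-k-\ell}$), whereas you spell it out explicitly; otherwise the two proofs match step for step.
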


\begin{proof}
Let us choose a $k$-dimensional plane $\Gamma$ in the matrix $C = A_S$. Without loss of generality, we assume that $S = \{d - \ell +1, \ldots, d \}$ and that  the first $k$ components of the indices vary  in  the plane $\Gamma$. Then 
\begin{gather*}
\sum \limits_{\gamma \in \Gamma} c_\gamma = \sum\limits_{i_1, \ldots ,i_{k} = 1}^n  \sum\limits_{j =1}^n  a_{i_1, \ldots ,i_{k}, \alpha, j, \ldots, j}  = 
\sum\limits_{j =1}^n \sum\limits_{i_1, \ldots ,i_{k} = 1}^n  a_{i_1, \ldots ,i_{k}, \alpha, j, \ldots, j}   = n,
 \end{gather*}
 where the index $\gamma = (i_1, \ldots, i_k, \alpha)$ and $\alpha \in I_{n}^{d - k - \ell}$.
Thus, the matrix $\frac{1}{n} C$ is  $k$-stochastic.
\end{proof}

\begin{teorema}
Let $A$ be a $d$-dimensional $k$-stochastic matrix of order $n$ and $S \subseteq \{ 1, \ldots, d\}$, $|S| = \ell$. Then the projection $ P_S(A)$ is a $(d - \ell)$-dimensional $(k - \ell)$-stochastic matrix of order $n$ if $k > \ell$, and $P_S(A) = n^{\ell - k+1} J_n^{d-\ell}$ if $k \leq \ell$. 
\end{teorema}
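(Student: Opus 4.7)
My plan is to imitate the style of the preceding proofs in the section: fix the set $S$ without loss of generality as $S = \{d-\ell+1, \ldots, d\}$, so that $(P_S(A))_\beta = \sum_{j_1, \ldots, j_\ell = 1}^n a_{\beta, j_1, \ldots, j_\ell}$ for every $\beta \in I_n^{d-\ell}$, and then split into the two cases according to whether $k > \ell$ or $k \leq \ell$. The key observation in both cases is that a sum of $k$ indices of $A$ running over $\{1,\ldots,n\}$ is precisely a sum over a $k$-dimensional plane of $A$, hence equal to $1$ by $k$-stochasticity.

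For the case $k > \ell$, I would pick an arbitrary $(k-\ell)$-dimensional plane $\Gamma$ in the $(d-\ell)$-dimensional matrix $P_S(A)$. After another WLOG reordering, assume the first $k-\ell$ components of the index vary in $\Gamma$ while the remaining $d - k$ components are fixed to some $\alpha$. Then
\begin{gather*}
\sum_{\gamma \in \Gamma} (P_S(A))_\gamma
= \sum_{i_1, \ldots, i_{k-\ell}=1}^n \sum_{j_1, \ldots, j_\ell = 1}^n a_{i_1, \ldots, i_{k-\ell}, \alpha, j_1, \ldots, j_\ell} = 1,
\end{gather*}
because the inner-outer combined sum has exactly $k$ free indices and so ranges over a $k$-dimensional plane of $A$. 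This shows $P_S(A)$ is $(k-\ell)$-stochastic.

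For the case $k \leq \ell$, I would show that every entry of $P_S(A)$ equals $n^{\ell-k}$. Fix $\beta$ and split the $\ell$ summation indices $j_1, \ldots, j_\ell$ into a block of size $\ell - k$ and a block of size $k$. For each fixed choice of the first $\ell - k$ of them, the remaining sum $\sum_{j_{\ell-k+1}, \ldots, j_\ell = 1}^n a_{\beta, j_1, \ldots, j_\ell}$ runs over a $k$-dimensional plane of $A$ and hence equals $1$; summing over the $n^{\ell-k}$ choices of the first block yields $(P_S(A))_\beta = n^{\ell-k}$. Since $J_n^{d-\ell}$ has all entries $1/n$, the matrix $n^{\ell-k+1} J_n^{d-\ell}$ also has constant entry $n^{\ell-k}$, giving $P_S(A) = n^{\ell - k + 1} J_n^{d-\ell}$.

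I do not expect a serious obstacle here — the proof is essentially bookkeeping of indices. The only point worth a bit of care is to verify that the WLOG reorderings (both of $S$ and of the free components of $\Gamma$) do not lose generality, which follows because $k$-stochasticity is invariant under transposes and under permutations of hyperplanes of the same direction.
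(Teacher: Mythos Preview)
Your proof is correct and follows essentially the same approach as the paper: the same WLOG placement of $S$, the same computation showing that a $(k-\ell)$-dimensional plane of $P_S(A)$ sums to $1$ when $k>\ell$, and the same observation that every entry equals $n^{\ell-k}$ when $k\le\ell$. The paper is slightly terser in the second case (it simply asserts $\sum_{j_1,\ldots,j_\ell} a_{\gamma,j_1,\ldots,j_\ell}=n^{\ell-k}$ without the explicit block splitting), but the content is the same.
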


\begin{proof}
Assume that $k > \ell$.
Let us choose a $(k - \ell)$-dimensional plane $\Gamma$ in the matrix $C = P_S(A)$. Without loss of generality, we assume that $S = \{d - \ell +1, \ldots, d \}$ and that  the first $k - \ell$ components of the indices vary  in $\Gamma$. Then 
\begin{gather*}
\sum \limits_{\gamma \in \Gamma} c_\gamma = \sum\limits_{i_1, \ldots ,i_{k - \ell} = 1}^n  \sum\limits_{j_1, \ldots, j_\ell =1}^n  a_{i_1, \ldots ,i_{k-\ell}, \alpha, j_1, \ldots, j_\ell}  = 1,
 \end{gather*}
 where the index $\gamma = (i_1, \ldots, i_{k-\ell}, \alpha)$.
Thus, the matrix $C$ is  $(k - \ell)$-stochastic.

If $k \leq \ell$, then every element of the matrix $C = P_S(A)$ is
\begin{gather*}
 c_\gamma =  \sum\limits_{j_1, \ldots, j_\ell =1}^n  a_{\gamma, j_1, \ldots, j_\ell}  = n^{\ell - k}.
 \end{gather*}
 Thus, $P_S(A)$ is equal to  $n^{\ell - k+1} J_n^{d-\ell}$.
\end{proof}

Let us now turn to  more complex matrix operations. 
The dot product of stochastic matrices  has already been studied in paper~\cite{ChildWan.multper}. Meanwhile,  Lemma 3.4 from~\cite{ChildWan.multper}  contains a   mistake that we correct here.

\begin{teorema} \label{dotstoch}
Let $A$ be a $d_1$-dimensional $k_1$-stochastic matrix of order $n$,  $B$  be a $d_2$-dimensional $k_2$-stochastic matrix of order $n$, and $r = \max \{  k_1 + d_2, k_2 + d_1\} $. If $r \leq d_1 + d_2 -2$, then $n^{k_1 + k_2 - r -1} (AB)$ is a $(d_1 + d_2 -2)$-dimensional  $r$-stochastic matrix of order $n$. Moreover, if $A$ and $B$ are polystochastic matrices, then $AB$ is a polystochastic matrix. 
\end{teorema}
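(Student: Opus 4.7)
The plan is to derive the general statement as a two-step corollary of Theorems~\ref{outerstoch} and~\ref{contractstoch}, by expressing the dot product as an outer product followed by a single two-index contraction, and then to handle the polystochastic case by a short direct line-sum calculation, since it sits outside the range $r \le d_1 + d_2 - 2$ to which the first part applies.

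For the first part I use that, by the definition of the dot product, $AB = (A \times B)_{\{d_1,\,d_1+1\}}$. Applying Theorem~\ref{outerstoch} gives that $n^{k_1+k_2-r}(A \times B)$ is a $(d_1+d_2)$-dimensional $r$-stochastic matrix of order $n$, with $r = \max\{d_1+k_2,\,d_2+k_1\}$. The hypothesis $r \le d_1+d_2-2$ is exactly the inequality $k + \ell \le d$ required in Theorem~\ref{contractstoch} for the $\{d_1,d_1+1\}$-contraction, taken with $\ell = 2$, $k = r$, and $d = d_1+d_2$. Applying that theorem introduces an additional factor $\tfrac{1}{n}$ into the scaling and preserves the degree of stochasticity, so $n^{k_1+k_2-r-1}(AB)$ is a $(d_1+d_2-2)$-dimensional $r$-stochastic matrix, as required.

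For the polystochastic statement ($k_1 = k_2 = 1$) the condition $r \le d_1+d_2-2$ can fail (for instance when $d_1 = d_2 = 2$ one gets $r = 3 > 2$), so I would argue directly. Writing $c_\gamma = \sum_{i=1}^n a_{\alpha i} b_{i\beta}$ with $\gamma = \alpha\beta$, $\alpha \in I_n^{d_1-1}$, $\beta \in I_n^{d_2-1}$, and summing over a line of $C = AB$ in a direction belonging to $\alpha$, I interchange the two summations and factor $b_{i\beta}$ outside the inner line sum of $A$, which equals $1$ by polystochasticity of $A$; the outer sum is then a line of $B$ in its first direction, which also equals $1$. The case of a direction belonging to $\beta$ is entirely symmetric.

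The main obstacle is bookkeeping rather than any analytic difficulty: tracking the scaling exponents correctly, checking that the $\ell = 2$ contraction really is legal under the stated hypothesis, and, crucially, noticing that the polystochastic conclusion cannot be absorbed into the general $r$-stochastic conclusion (being $r$-stochastic for some $r>1$ is in general independent of being $1$-stochastic, and the inequality defining $r$ excludes the small-dimension polystochastic cases). This last point is precisely the subtlety behind the erroneous lemma of~\cite{ChildWan.multper} that the theorem advertises as corrected, so the separate polystochastic calculation must be written out rather than silently inferred.
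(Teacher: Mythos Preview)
Your proposal is correct and follows essentially the same approach as the paper's proof: first obtain the $r$-stochastic claim by composing Theorem~\ref{outerstoch} with Theorem~\ref{contractstoch} applied to the $\{d_1,d_1+1\}$-contraction, and then treat the polystochastic case separately by a direct line-sum computation. Your remark that the polystochastic conclusion cannot be inferred from the first part (since $r$-stochasticity for $r>1$ does not imply $1$-stochasticity) is exactly the reason the paper splits off the case $k_1=k_2=1$.
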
  
\begin{proof}
Let $k_1 > 1$ or $k_2 > 1$.  By the definition, $AB$ is the $(d_1, 1)$-contraction of $A \times B$. By Theorem~\ref{outerstoch}, $n^{k_1 + k_2 - r} (A \times B)$ is an $r$-stochastic matrix with $r = \max\{ d_1 +k_2, d_2 + k_1 \}$. Using Theorem~\ref{contractstoch} for the $(d_1,1)$-contraction, we obtain that $ n^{k_1 + k_2 - r -1} (AB)$ is a $(d_1 + d_2 -2)$-dimensional  $r$-stochastic matrix of order $n$.

Assume now that  $k_1 = k_2  = 1$. Without loss of generality, consider the $1$-dimensional plane $\Gamma$ in which the first component of indices vary.  Then for the matrix $C = AB$ we have
\begin{gather*}
\sum \limits_{\gamma \in \Gamma} c_\gamma =  \sum\limits_{i = 1}^n \sum\limits_{j = 1 }^n   a_{i,  \alpha, j } b_{j, \beta}  = 
\sum\limits_{j = 1}^n    b_{j, \beta}  \sum\limits_{i = 1}^n   a_{i,  \alpha, j }  =1,
 \end{gather*}
 where index $\gamma = (i, \alpha, \beta)$, $\alpha \in I_n^{d_1 -2}$, $\beta \in I_n^{d_2-1}$.
Thus, the matrix $C$ is  polystochastic.
\end{proof}

Lemma 3.4 from~\cite{ChildWan.multper} claimed that the dot product of $k_1$- and $k_2$-stochastic matrices $A$ and $B$ is an $r$-stochastic matrix with $r = \max \{  k_1 + d_2, k_2 + d_1\} -2$. It is not hard to find a pair of $2$-dimensional $2$-stochastic matrices $A$ and $B$ (in which the sums of all entries are equal to $1$) satisfying $AB = 0$ that will be a counterexample to this statement. 

The following result on the dot product of a polystochastic matrix and the uniform matrix can be proved by a direct calculation.

\begin{utv} \label{dotuniform}
If $A$ is a $d$-dimensional polystochastic matrix of order $n$, then
$$A  J^t_n = J^{d+t-2}_n \mbox { and } J^t_n A = J^{d+t-2}_n.$$ 
\end{utv}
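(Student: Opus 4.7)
The plan is to compute both dot products entry-by-entry from the definition, exploit the fact that every entry of $J^t_n$ equals $1/n$ to pull this constant out of the contracted sum, and then invoke the polystochasticity of $A$ to evaluate the remaining line sum. Since $J^t_n$ is essentially a constant matrix, each dot product collapses to a single line sum of $A$, which must equal $1$.

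For the first equality, by the definition of the dot product the $\gamma$-entry of $A J^t_n$, with $\gamma = \alpha\beta$, $\alpha \in I_n^{d-1}$, $\beta \in I_n^{t-1}$, is $\sum_{i=1}^n a_{\alpha i}(J^t_n)_{i\beta}$. Substituting $1/n$ for each entry of $J^t_n$ and factoring gives $\frac{1}{n}\sum_{i=1}^n a_{\alpha i}$; the inner sum ranges over a single line of $A$ (the first $d-1$ coordinates are fixed, the last varies), so by $1$-stochasticity of $A$ it equals $1$. Thus every entry of $A J^t_n$ equals $1/n$, which is precisely $J^{d+t-2}_n$.

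The second equality, $J^t_n A = J^{d+t-2}_n$, is symmetric: now the $\gamma$-entry with $\gamma = \alpha\beta$, $\alpha \in I_n^{t-1}$, $\beta \in I_n^{d-1}$, is $\frac{1}{n}\sum_{i=1}^n a_{i\beta}$, and the contracted variable ranges over the first coordinate of $A$. The inner sum is again a line sum of the polystochastic matrix $A$, so it equals $1$, and every entry of $J^t_n A$ is $1/n$.

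There is essentially no obstacle here. The only points requiring care are purely bookkeeping: verifying that the resulting dimension is indeed $d+t-2$, and that the index blocks concatenate as in the definition of the $(d_1,1)$-dot product so that what is left after extracting the factor $1/n$ is genuinely a single line of $A$ of the form required by the polystochasticity hypothesis. Both facts are immediate from the definitions, which is why the statement is introduced as one that can be proved by direct calculation.
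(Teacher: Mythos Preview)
Your proof is correct and is exactly the direct calculation the paper alludes to; the paper itself omits the proof, stating only that the result ``can be proved by a direct calculation.'' Your entry-by-entry computation, reducing each entry to $\tfrac{1}{n}$ times a single line sum of the polystochastic matrix $A$, is precisely the intended argument.
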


As a corollary from Theorem~\ref{dotstoch}, we have the following property of  the circle product.

\begin{teorema}
Let $A$ be a $d_1$-dimensional polystochastic  matrix of order $n$ and $B$  be a $d_2$-dimensional polystochastic matrix of order $n$. Then $A \circ B$ is a polystochastic matrix. 
\end{teorema}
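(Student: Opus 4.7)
\textbf{Proof plan}. My plan is to iterate Theorem~\ref{dotstoch}. Unfolding the definition of the circle product,
$$A \circ B = (\cdots((A \cdot_{d_1, 1}B) \cdot_{d_1 - 1, 1}B) \cdots) \cdot_{2, 1}B,$$
we see that $A \circ B$ is obtained from $A$ by applying $d_1 - 1$ consecutive $(k, 1)$-dot products with $B$ for $k = d_1, d_1 - 1, \ldots, 2$. The very first of these is the standard dot product $A \cdot B = A \cdot_{d_1, 1} B$, which by the polystochastic case of Theorem~\ref{dotstoch} is a polystochastic matrix of dimension $d_1 + d_2 - 2$.

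The main (modest) obstacle is that the subsequent iterations are not standard dot products: they contract the left factor at an axis that is no longer the last one. I would handle this by first strengthening the polystochastic case of Theorem~\ref{dotstoch} to an arbitrary $(i, j)$-dot product of polystochastic matrices. Indeed, an $(i, j)$-dot product differs from the standard dot product only by transpositions of directions in each factor, and both polystochasticity and the line-sum calculation used in the proof of Theorem~\ref{dotstoch} are manifestly invariant under such transpositions; hence the $(i, j)$-dot product of two polystochastic matrices has all line sums equal to $1$ and is polystochastic.

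With this strengthening in place, the theorem follows by induction on the number of remaining dot products with $B$. At each step, the current left factor is polystochastic by the inductive hypothesis, $B$ is polystochastic by assumption, and the next $(k, 1)$-dot product is again polystochastic by the strengthened form of Theorem~\ref{dotstoch}. After $d_1 - 1$ such iterations one arrives at $A \circ B$, which is therefore polystochastic. (Alternatively, one could verify the claim by a single direct computation of line sums in the explicit formula for $c_\gamma$, splitting into the cases where the varying coordinate is the leading index $i$ or lies inside some block $\beta_k$; in both cases repeated application of the polystochasticity of $A$ and $B$ collapses the sum to $1$.)
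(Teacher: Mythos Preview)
Your proposal is correct and follows essentially the same route as the paper: unwind the circle product as $d_1-1$ iterated dot products with $B$ and invoke Theorem~\ref{dotstoch} at each step. Your treatment is in fact more careful than the paper's, which applies Theorem~\ref{dotstoch} without explicitly noting that the later iterations are $(k,1)$-dot products rather than standard ones; your observation that the $(i,j)$-dot product differs from the standard one only by transposes (which preserve polystochasticity) fills exactly that gap.
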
  

\begin{proof}
 By the definition, $A \circ B$ is the $(d_1 -1)$-times dot product:
  $$A \circ B = (\cdots((A \cdot_{d_1, 1}B) \cdot_{d_1 - 1, 1}B) \cdots) \cdot_{2, 1}B.$$
 
 By Theorem~\ref{dotstoch}, if $A$ and $B$ are polystochastic matrices, then the dot product of $A$ and $B$ is a polystochastic matrix. So in this case, the matrix $A \circ B$ is also polystochastic. 
\end{proof}

If $A$ is a $k_1$-stochastic matrix and $B$ is a $k_2$-stochastic matrix, where $k_1$ or $k_2$ is greater than $1$, then  $A \circ B$ is not necessarily a $k$-stochastic matrix. By Theorem~\ref{dotstoch},  for $d_1 \geq 2$ each of the $d_1 - 1$ dot products in the circle product  $A \circ B$ decreases the difference between the degree of stochastivity $r$ and the dimension $d$ of the matrices until the former exceeds the latter. 

Since $A \circ B$ is an appropriate dot product of matrices $A$ and $B$, the following is also straightforward from Proposition~\ref{dotuniform}.

\begin{utv} 
If $A$ is a $d$-dimensional polystochastic matrix of order $n$, then
$$A \circ  J^t_n = J^{(d-1)(t-1)+1}_n  \mbox{ and } J^t_n \circ  A = J^{(d-1)(t-1)+1}_n.$$ 
\end{utv}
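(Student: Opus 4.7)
The plan is to iterate Proposition~\ref{dotuniform} along the sequence of $d-1$ dot products that defines $A \circ J^t_n$. Specifically, by definition,
$$A \circ J^t_n = (\cdots((A \cdot_{d, 1} J^t_n) \cdot_{d-1, 1} J^t_n) \cdots) \cdot_{2, 1} J^t_n,$$
and I would prove by induction on the step $k$ that after $k$ of these dot products ($1 \le k \le d-1$) the intermediate matrix is exactly $J^{d + k(t-2)}_n$. The base case $k = 1$ is essentially Proposition~\ref{dotuniform}.

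For the induction step, the key observation is that the reasoning of Proposition~\ref{dotuniform} carries over verbatim to any $(i, 1)$-dot product with the uniform matrix: if $M$ is a polystochastic matrix of dimension $m$, then every entry of $M \cdot_{i, 1} J^t_n$ has the form $\frac{1}{n}\sum_{k=1}^n m_{\ldots, k, \ldots}$, where the sum runs over a direction-$i$ line of $M$. Because $M$ is polystochastic, that line sum equals $1$, so every entry of the result equals $1/n$; hence the result is $J^{m + t - 2}_n$, which is again polystochastic. Applying this fact $d - 1$ times and adding up the dimension increments gives a final dimension of $d + (d-1)(t-2) = (d-1)(t-1) + 1$, as required.

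The second equality $J^t_n \circ A = J^{(d-1)(t-1) + 1}_n$ is proved by the symmetric induction, iterating $t - 1$ dot products $\cdot_{i, 1} A$. At each step the line sum collapses on the $A$-side instead, so the intermediate matrix is again uniform with every entry $1/n$, and one gains $d - 2$ in the dimension per step. Starting from dimension $t$ this yields $t + (t-1)(d-2) = (d-1)(t-1) + 1$.

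The only mild obstacle is that Proposition~\ref{dotuniform} is stated only for the standard dot product $\cdot_{d, 1}$, whereas the induction uses arbitrary $(i, 1)$-dot products with a uniform or polystochastic factor. This is justified by the remark preceding Proposition~\ref{dotprop} that the properties of the dot product remain essentially the same for general $(i, j)$-dot products, and in any case the calculation is identical because the uniform factor makes the particular index position summed over irrelevant. Beyond this, the argument is routine bookkeeping on dimensions.
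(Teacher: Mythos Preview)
Your proposal is correct and is exactly the argument the paper has in mind: the paper states only that the result is ``straightforward from Proposition~\ref{dotuniform}'' via the definition of the circle product as iterated dot products, and your induction is the natural way to spell this out. The one-line extension of Proposition~\ref{dotuniform} to $(i,1)$-dot products that you justify is unproblematic, and your dimension bookkeeping $d+(d-1)(t-2)=t+(t-1)(d-2)=(d-1)(t-1)+1$ is correct.
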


\section{Permanents  of products of multidimensional matrices} \label{perprodsec}

In this section we estimate the permanent of the above products of matrices by means of the permanent of the factors. We start with the outer product, for which we have the exact equality.

\begin{teorema} \label{directprodper}
Let  $A$ be a $d_1$-dimensional matrix of order $n$ and $B$ be a $d_2$-dimensional matrix of order $n$. Then 
$$\per (A \times B) = n!( \per A \cdot \per B).$$
\end{teorema}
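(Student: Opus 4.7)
The plan is to establish a bijective correspondence between diagonals in $C = A \times B$ and triples consisting of a diagonal in $A$, a diagonal in $B$, and a bijection matching the two.

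First, I would carefully describe diagonals of $C$. Let $D = \{\gamma^1, \ldots, \gamma^n\}$ be a diagonal in $C$ and write each index as a concatenation $\gamma^i = \alpha^i \beta^i$ with $\alpha^i \in I_n^{d_1}$ and $\beta^i \in I_n^{d_2}$. The diagonal condition (each of the $d_1+d_2$ coordinate positions takes all values $1,\ldots,n$ across the $\gamma^i$) forces the first $d_1$ coordinates of the $\alpha^i$ to realize each value exactly once, and similarly for the $\beta^i$. In particular the $\alpha^i$ are pairwise distinct, so $D_A := \{\alpha^1, \ldots, \alpha^n\}$ is a diagonal in $A$, and analogously $D_B := \{\beta^1, \ldots, \beta^n\}$ is a diagonal in $B$.

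Second, I would fix an arbitrary enumeration of $D_A$ and $D_B$ and argue that $D$ is determined by a bijection $\sigma \in S_n$ via $D = \{(\alpha^i, \beta^{\sigma(i)}) : 1 \leq i \leq n\}$. Conversely, for any pair of diagonals $D_A \in \mathcal{D}(A)$, $D_B \in \mathcal{D}(B)$ and any $\sigma \in S_n$, the set $\{(\alpha^i, \beta^{\sigma(i)})\}$ is automatically a diagonal in $C$: its first $d_1$ coordinates inherit distinctness from $D_A$ and its last $d_2$ coordinates inherit distinctness from $D_B$, regardless of how $\sigma$ permutes. This gives a bijection $\mathcal{D}(C) \longleftrightarrow \mathcal{D}(A) \times \mathcal{D}(B) \times S_n$.

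Third, I would compute the permanent using this decomposition, exploiting the crucial observation that the product of entries is independent of $\sigma$:
$$\prod_{i=1}^n c_{(\alpha^i, \beta^{\sigma(i)})} = \prod_{i=1}^n a_{\alpha^i} b_{\beta^{\sigma(i)}} = \Bigl(\prod_{i=1}^n a_{\alpha^i}\Bigr)\Bigl(\prod_{j=1}^n b_{\beta^j}\Bigr),$$
since $\sigma$ merely reindexes the product of the $b_{\beta^j}$. Summing over all triples yields
$$\per(C) = \sum_{\sigma \in S_n} \sum_{D_A \in \mathcal{D}(A)} \sum_{D_B \in \mathcal{D}(B)} \prod_i a_{\alpha^i} \prod_j b_{\beta^j} = n! \cdot \per(A) \cdot \per(B).$$

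The only subtle point (and the main thing to get right) is the $n!$ factor: diagonals are unordered sets, so one must verify that distinct choices of $\sigma$ really do yield distinct unordered sets of concatenated indices. This is immediate because the $\alpha^i$ are pairwise distinct, so from any $(\alpha^i, \beta^{\sigma(i)}) \in D$ the first block $\alpha^i$ identifies $i$ uniquely and thereby recovers $\sigma(i)$. Everything else is bookkeeping.
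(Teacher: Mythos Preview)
Your proof is correct and follows essentially the same approach as the paper: both establish that every diagonal of $A\times B$ arises from a pair of diagonals $D_A\in\mathcal{D}(A)$, $D_B\in\mathcal{D}(B)$ together with a permutation $\sigma\in S_n$ matching them, and then compute the permanent by summing over these triples. You are in fact slightly more careful than the paper, explicitly verifying both directions of the bijection and justifying why distinct $\sigma$ give distinct diagonals, whereas the paper simply asserts that every diagonal of $A\times B$ is covered by the construction.
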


\begin{proof}
Let $D_A = \{\alpha^1, \ldots, \alpha^n\}$ be a diagonal in the matrix $A$  and $D_B = \{ \beta^1, \ldots, \beta^n\}$  be a diagonal in the matrix $B$. For every permutation $\sigma \in S_n$, consider a collection of indices $D = \{ \gamma^1, \ldots, \gamma^n\}$ such that $\gamma^i = \alpha^i \beta^{\sigma(i)}$ for all $i \in \{ 1, \ldots, n\}$. From the definition of outer product,  we see that $D$ is a diagonal in the matrix $A \times B$. Moreover, every diagonal of $A \times B$ is  covered by this construction. 
 Therefore, for the matrix $C = A \times B$ we have
\begin{gather*}
\per C = \sum\limits_{D \in \mathcal{D}(A \times B)} \prod\limits_{\gamma \in D} c_\gamma  = n! \sum\limits_{D_A \in \mathcal{D}(A), D_B \in \mathcal{D}(B)} \prod\limits_{\alpha \in D_A}  a_{\alpha} \prod\limits_{\beta \in D_B} b_{\beta} \\
= n! \left( \sum\limits_{D_A \in \mathcal{D}(A)} \prod\limits_{\alpha \in D_A}  a_{\alpha}  \right) \left( \sum\limits_{D_B \in \mathcal{D}(B)} \prod\limits_{\beta \in D_B}  b_{\beta}  \right) =n! (\per A \cdot \per B). 
\end{gather*}
\end{proof}

As a corollary, we have that for every pair of matrices  $A$ and $B$ of the same order and dimensions $d_1$ and $d_2$,  there exists a matrix  of dimension $d_1 +d_2$ whose permanent is equal to $\per A \cdot \per B$. As it was shown~\cite{my.values01}, one can find a matrix of one less dimension satisfying this property. For the sake of completeness, we provide its proof here.

\begin{utv}[\cite{my.values01}]
For a $d_1$-dimensional matrix $A$ of order $n$ and a $d_2$-dimensional matrix $B$ of order $n$, there exists a $(d_1 + d_2 -1)$-dimensional matrix $C$ such that $\per C = \per A \cdot \per B$. 
\end{utv}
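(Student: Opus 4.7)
The plan is to build $C$ by identifying one coordinate of $A$ with one coordinate of $B$ in the outer product $A \times B$. Concretely, I would define $C$ to be the $(d_1 + d_2 - 1)$-dimensional matrix of order $n$ given by
$$c_{\alpha_1, \ldots, \alpha_{d_1}, \beta_2, \ldots, \beta_{d_2}} = a_{\alpha_1, \ldots, \alpha_{d_1}} \cdot b_{\alpha_{d_1}, \beta_2, \ldots, \beta_{d_2}}.$$
Informally, $C$ keeps only those entries of $A \times B$ for which the $d_1$-th coordinate of the $A$-index coincides with the first coordinate of the $B$-index, with that shared coordinate listed once.

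The main step is to exhibit a bijection $\Phi\colon \mathcal{D}(C) \to \mathcal{D}(A) \times \mathcal{D}(B)$ under which $\prod_{\gamma \in D} c_\gamma$ factors as $\prod_{\alpha \in D_A} a_\alpha \cdot \prod_{\beta \in D_B} b_\beta$. Given a diagonal $D = \{\gamma^1, \ldots, \gamma^n\}$ in $C$, I write $\gamma^i = (\alpha^i_1, \ldots, \alpha^i_{d_1}, \beta^i_2, \ldots, \beta^i_{d_2})$ and set $D_A := \{(\alpha^i_1, \ldots, \alpha^i_{d_1})\}_{i=1}^n$ together with $D_B := \{(\alpha^i_{d_1}, \beta^i_2, \ldots, \beta^i_{d_2})\}_{i=1}^n$. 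The diagonal condition on $D$ forces the coordinates $\{\alpha^i_k\}_i$ and $\{\beta^i_k\}_i$ to be permutations of $\{1, \ldots, n\}$ for every relevant $k$, so $D_A \in \mathcal{D}(A)$ and $D_B \in \mathcal{D}(B)$; the factorization of the product over $D$ is then immediate from the definition of $C$.

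For the inverse direction, given any $D_A = \{\alpha^1, \ldots, \alpha^n\} \in \mathcal{D}(A)$ and $D_B = \{\beta'^1, \ldots, \beta'^n\} \in \mathcal{D}(B)$, both $\{\alpha^i_{d_1}\}_{i=1}^n$ and $\{\beta'^i_1\}_{i=1}^n$ are permutations of $\{1, \ldots, n\}$, so there is exactly one permutation $\sigma \in S_n$ with $\alpha^i_{d_1} = \beta'^{\sigma(i)}_1$ for all $i$. This $\sigma$ produces a unique preimage diagonal of $C$, showing $\Phi$ is a bijection. Summing the factored product over $\mathcal{D}(A) \times \mathcal{D}(B)$ yields $\per C = \per A \cdot \per B$.

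The combinatorial content is mild; the only point needing care is verifying that gluing along the shared coordinate produces a unique pairing, which is exactly where both sets of values being permutations of $\{1,\ldots,n\}$ is used. Compared with Theorem~\ref{directprodper}, the factor of $n!$ disappears precisely because this pairing is no longer free.
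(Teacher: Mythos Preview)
Your proof is correct and follows essentially the same approach as the paper: both construct $C$ by gluing one coordinate of $A$ to one coordinate of $B$ inside $A\times B$ (the paper writes it as $c_{i\alpha\beta}=a_{i\alpha}b_{\sigma(i)\beta}$ for a fixed permutation $\sigma$, which is your construction up to a transpose and with $\sigma=\mathrm{id}$), and both establish the bijection between diagonals of $C$ and pairs $(D_A,D_B)$ via the unique matching along the shared coordinate.
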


\begin{proof}
Let us fix some permutation $\sigma \in S_n$ and define the matrix $C$ with entries $c_{\gamma}$, $\gamma = i \alpha \beta$, as
$$c_{i \alpha \beta} = a_{i \alpha} b_{\sigma(i) \beta}.$$
In other words, the matrix $C$ is  composed by $n$  different $(d_1 + d_2 -2)$-dimensional planes of the matrix $A \times B$  of the same direction such that no  two of these planes  were in the same hyperplane of $A \times B$.   

Note that each pair of diagonals $D_A = \{(1, \alpha^1), \ldots, (n, \alpha^n)\}$ from the  matrix $A$  and $D_B = \{ (1, \beta^1), \ldots, (n,\beta^n)\}$ from the matrix $B$  one-to-one corresponds to a  diagonal $D = \{ \gamma^1, \ldots, \gamma^n\}$ of the matrix $C$, where  $\gamma^i = i \alpha^i  \beta^{\sigma(i)}$. 

Repeating the calculations from the proof of Theorem~\ref{directprodper}, we obtain $\per C = \per A \cdot \per B$.
\end{proof}

Next, we study the permanent of the Kronecker product of multidimensional matrices. In~\cite{brualdi.perdirprod}, Brualdi proved the following inequalities on the permanent of the Kronecker product of nonnegative $2$-dimensional matrices.

\begin{teorema}[\cite{brualdi.perdirprod}]
Let $A$ and $B$ be nonnegative $2$-dimensional matrices of orders $n_1$ and $n_2$, respectively. Then there exists a constant $K$, $K \geq 1$, depending only on  $n_1$ and $n_2$ such that 
$$ (\per A)^{n_2} (\per B)^{n_1} \leq \per (A \otimes B) \leq K (\per A)^{n_2} (\per B)^{n_1}.$$
\end{teorema}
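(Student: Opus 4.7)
The plan is to treat the two inequalities separately by analysing diagonals of the Kronecker product $C := A \otimes B$, whose $n_1 n_2$ rows and columns are both indexed by pairs $(i,j)$ with $i \in \{1,\ldots,n_1\}$, $j \in \{1,\ldots,n_2\}$, and whose entries are $c_{(i,j),(k,l)} = a_{ik}b_{jl}$.

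For the lower bound I would exhibit a distinguished family of diagonals of $C$ whose total contribution is exactly $(\per A)^{n_2}(\per B)^{n_1}$. Parametrise this family by tuples $(\sigma_1,\ldots,\sigma_{n_2};\tau_1,\ldots,\tau_{n_1})$ with $\sigma_j \in S_{n_1}$ and $\tau_k \in S_{n_2}$, and set
\[
\pi(i,j) := \bigl(\sigma_j(i),\, \tau_{\sigma_j(i)}(j)\bigr).
\]
Given any target $(k,l)$, one inverts via $j = \tau_k^{-1}(l)$ and then $i = \sigma_j^{-1}(k)$, so $\pi$ is a bijection of $\{1,\ldots,n_1\} \times \{1,\ldots,n_2\}$, hence a diagonal of $C$. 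Regrouping the $b$-factors by $k = \sigma_j(i)$, the corresponding entry product factors as
\[
\prod_{i,j} a_{i,\sigma_j(i)}\, b_{j,\tau_{\sigma_j(i)}(j)}
= \prod_{j=1}^{n_2}\Bigl(\prod_i a_{i,\sigma_j(i)}\Bigr) \cdot \prod_{k=1}^{n_1}\Bigl(\prod_j b_{j,\tau_k(j)}\Bigr),
\]
and summation over all $(n_1!)^{n_2}(n_2!)^{n_1}$ tuples yields $(\per A)^{n_2}(\per B)^{n_1}$ on the nose. Since different tuples produce different diagonals (the parameters are recovered from $\pi$) and all contributions are nonnegative, this is a subsum of $\per C$ and the lower bound follows.

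For the upper bound I would bound every individual diagonal contribution by $(\per A)^{n_2}(\per B)^{n_1}$, which then gives the statement with $K = (n_1 n_2)!$. To each diagonal $\pi$ associate the profile matrix $P = (p_{ik})$ defined by $p_{ik} := |\{j : \pi_1(i,j) = k\}|$ and, symmetrically, $Q = (q_{jl})$ from $\pi_2$. Both $P$ and $Q$ have constant row and column sums (equal to $n_2$ and $n_1$ respectively), so $P/n_2$ is doubly stochastic. By the Birkhoff--von Neumann theorem, $P/n_2 = \sum_m \lambda_m M_{\sigma_m}$ is a convex combination of permutation matrices, and then
\[
\prod_{i,k} a_{ik}^{p_{ik}}
= \biggl(\prod_m \Bigl(\prod_i a_{i,\sigma_m(i)}\Bigr)^{\lambda_m}\biggr)^{n_2}
\le \biggl(\sum_m \lambda_m \prod_i a_{i,\sigma_m(i)}\biggr)^{n_2}
\le (\per A)^{n_2},
\]
using weighted AM--GM and the fact that $\prod_i a_{i,\sigma_m(i)} \le \per A$ for every permutation. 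The symmetric bound $\prod_{j,l} b_{jl}^{q_{jl}} \le (\per B)^{n_1}$ holds, and since the entry product of every diagonal factors as $\prod a^p \cdot \prod b^q$, each of the $(n_1 n_2)!$ terms in the expansion of $\per C$ is at most $(\per A)^{n_2}(\per B)^{n_1}$.

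The main obstacle is that $K = (n_1 n_2)!$ obtained above is far from optimal; the sharp value should be $K = (n_1 n_2)!/\bigl((n_1!)^{n_2}(n_2!)^{n_1}\bigr)$, attained by $A = J_{n_1}$, $B = J_{n_2}$. Obtaining sharpness would require grouping diagonals by their profile $(P,Q)$ and carefully redistributing each group's combined contribution across the structured family used in the lower bound, rather than pessimistically bounding every diagonal by the maximum. However, since the statement only asks for existence of some $K = K(n_1,n_2) \ge 1$, the crude estimate above is already sufficient, and the two parts together complete the proof.
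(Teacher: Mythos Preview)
The paper does not prove this theorem; it is quoted from Brualdi's paper \cite{brualdi.perdirprod} without proof and used only as background before the multidimensional counterexamples. So there is no ``paper's own proof'' to compare against.

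That said, your argument is correct. For the lower bound, your parametrisation $\pi(i,j) = (\sigma_j(i),\tau_{\sigma_j(i)}(j))$ does produce $(n_1!)^{n_2}(n_2!)^{n_1}$ distinct diagonals whose total weight is exactly $(\per A)^{n_2}(\per B)^{n_1}$; the injectivity check (recovering the $\sigma_j$ from $\pi_1$ and then the $\tau_k$ from $\pi_2$) is the key point and you handle it. For the upper bound, the factorisation $\prod_{(i,j)} c_{(i,j),\pi(i,j)} = \prod_{i,k} a_{ik}^{p_{ik}} \prod_{j,l} b_{jl}^{q_{jl}}$ with $P/n_2$ and $Q/n_1$ doubly stochastic is correct, and the Birkhoff plus weighted AM--GM bound goes through (the edge case $a_{ik}=0$ with $p_{ik}=0$ is harmless since no permutation in the Birkhoff decomposition with positive coefficient can hit that entry). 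Your constant $K = (n_1 n_2)!$ is crude but, as you say, suffices for the existence statement; Brualdi's original argument obtains the sharp constant by a more careful combinatorial expansion, but that refinement is not required here.
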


Unfortunately, both of these inequalities do not hold for multidimensional matrices.

\begin{zam}
There exists a nonnegative $3$-dimensional matrix $A$ of order $n$ such that $\per (A \otimes A) < (\per A)^{2n}$.
\end{zam}

\begin{proof}
Consider the following $3$-dimensional matrix $A$ of order $n =3$
$$
A = \left( \begin{array}{ccc|ccc|ccc}
1 & 0 & 0 & 0 & 0 & 0 & 0 & 1 & 0 \\
0 & 0 & 1 & 0 & 1 & 0 & 0 & 0 & 0 \\
0 & 0 & 0 & 1 & 0 & 0 & 0 & 0 & 1 \\
\end{array}\right).
$$
It is easy to see that the permanent of $A$ is equal to $2$, so $(\per A)^{2n} = (\per A)^{6} = 64$. On the other hand, it can be checked by direct calculations that the permanent of $A \otimes A$ is $40$.  
\end{proof}

\begin{zam}
There exist nonnegative $3$-dimensional matrices $A$ and $B$ such that $\per B = 0$ but $\per (A \otimes B) >0$.
\end{zam}

\begin{proof}

Consider  the following $3$-dimensional matrices of order $2$:
$$  A = \left( \begin{array}{cc|cc}
1 & 1 & 1 & 1 \\
1 & 1 & 1 & 1 \\
\end{array} \right); ~~~
B = \left( \begin{array}{cc|cc}
0 & 1 & 1 & 0 \\
1 & 0 & 0 & 1 \\
\end{array} \right). 
$$

Note that $\per A =4$ and $\per B = 0$. Then 

$$ A \otimes B =
\left( \begin{array}{cccc|cccc|cccc|cccc}
0 & 1 & 0 & 1 &  0 & 1 & 0 & 1 & 1 & 0 & 1 & 0 &  1 & 0 & 1 & 0 \\
1 & 0 & 1 & 0 &  1 & 0 & 1 & 0 & 0 & 1 & 0 & 1 &  0 & 1 & 0 & 1 \\
0 & 1 & 0 & 1 &  0 & 1 & 0 & 1 & 1 & 0 & 1 & 0 &  1 & 0 & 1 & 0 \\
1 & 0 & 1 & 0 &  1 & 0 & 1 & 0 & 0 & 1 & 0 & 1 &  0 & 1 & 0 & 1 \\
\end{array} \right)
$$
and it can be checked that $\per (A \otimes B) = 64 > 0$.
\end{proof}

Meanwhile, the following lower bound on the Kronecker product of nonnegative matrices remains true for the multidimensional case.

\begin{teorema}
Let $A$ and $B$ be nonnegative $d$-dimensional matrices of orders $n_1$ and $n_2$, respectively. Then
$$\per (A \otimes B) \geq  \per A (\per B)^{n_1} +  (\per A)^{n_2} \per B - \per A \per B.$$
\end{teorema}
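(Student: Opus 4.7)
The plan is to lower-bound $\per(A\otimes B)$ by exhibiting a structured subfamily of diagonals of $A\otimes B$ and applying inclusion--exclusion. Since $A$ and $B$ are nonnegative, every diagonal of $A\otimes B$ contributes nonnegatively to $\per(A\otimes B)$, so it suffices to identify two families of diagonals whose combined contribution, counted by inclusion--exclusion, equals the right-hand side.

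The two natural candidates are the families of ``$A$-aligned'' and ``$B$-aligned'' diagonals. For the $A$-aligned family $T_1$: given a diagonal $D_A=\{\alpha^1,\dots,\alpha^{n_1}\}$ of $A$ together with, for every $i\in\{1,\ldots,n_1\}$, a diagonal $D_B^i=\{\beta^{i,1},\dots,\beta^{i,n_2}\}$ of $B$, I form the indices $\gamma^{i,j}$ with $\gamma^{i,j}_k=(\alpha^i_k-1)n_2+\beta^{i,j}_k$. Coordinate-by-coordinate, the $n_2$ values $\gamma^{i,j}_k$ with fixed $i$ fill the subinterval $[(\alpha^i_k-1)n_2+1,\alpha^i_k n_2]$ because $D_B^i$ is a diagonal of $B$, and different $i$ contribute disjoint subintervals because $D_A$ is a diagonal of $A$, so $\{\gamma^{i,j}\}$ is a diagonal of $A\otimes B$. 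I then define $T_2$ symmetrically by interchanging the roles of $A$ and $B$, and observe that $T_1\cap T_2$ is precisely the family of ``product'' diagonals $D_A\times D_B$ parametrized by a single pair of diagonals.

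Next comes the sum computation. For a diagonal in $T_1$ the product of entries is $\prod_{i,j} a_{\alpha^i}b_{\beta^{i,j}}$; summing over the $D_B^i$ independently produces the factor $(\per B)^{n_1}$, and the outer sum over $D_A$ should yield $\per A$, giving
$$\sum_{D\in T_1}\prod_{\gamma\in D}c_\gamma \;=\; \per A\cdot(\per B)^{n_1}.$$
The analogous computations give $\sum_{T_2}\prod c_\gamma=(\per A)^{n_2}\per B$ and $\sum_{T_1\cap T_2}\prod c_\gamma=\per A\cdot\per B$. Inclusion--exclusion then yields
$$\per(A\otimes B)\;\ge\;\sum_{D\in T_1\cup T_2}\prod_{\gamma\in D}c_\gamma\;=\;\per A\,(\per B)^{n_1}+(\per A)^{n_2}\per B-\per A\,\per B,$$
as required.

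The principal obstacle is the careful verification of $\sum_{T_1}\prod c_\gamma=\per A\cdot(\per B)^{n_1}$: since each entry $a_{\alpha^i}$ appears in the product at all $n_2$ indices of the $i$-th block, a naive rearrangement of the sum yields $(\per B)^{n_1}\sum_{D_A}\prod_i a_{\alpha^i}^{n_2}$, and one has to argue that in the present set-up this collapses to $\per A\cdot(\per B)^{n_1}$. I would devote the bulk of the proof to this bookkeeping, possibly by refining how $T_1$ is parametrized (e.g., restricting to aligned diagonals whose ``block content'' exactly accounts for each factor once) or by combining with the $T_2$-sum to cancel the multiplicities. Once this step is settled, the symmetric calculation for $T_2$ and the inclusion--exclusion assembly are routine.
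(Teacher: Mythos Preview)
Your construction of the families $T_1$, $T_2$ and the inclusion--exclusion step mirror the paper's argument exactly; the paper phrases it in terms of \emph{positive diagonals}, building an injection from pairs (positive diagonal of $A$, $n_1$-tuple of positive diagonals of $B$) into positive diagonals of $A\otimes B$, doing the same with the roles swapped, and noting that the two images overlap precisely in the ``product'' diagonals.

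The ``principal obstacle'' you flag is not a bookkeeping issue but a real gap, and it cannot be repaired. Your own computation is the correct one: for a diagonal in $T_1$ the entry product is $\prod_{i,j}a_{\alpha^i}b_{\beta^{i,j}}=\bigl(\prod_i a_{\alpha^i}^{\,n_2}\bigr)\prod_{i,j} b_{\beta^{i,j}}$, so
\[
\sum_{D\in T_1}\prod_{\gamma\in D}c_\gamma \;=\; (\per B)^{n_1}\sum_{D_A\in\mathcal D(A)}\prod_{i}a_{\alpha^i}^{\,n_2},
\]
which is $(\per B)^{n_1}$ times the permanent of the entrywise $n_2$-th power of $A$, not $(\per B)^{n_1}\per A$. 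No reparametrization of $T_1$ collapses this, because the inequality as stated is in fact false for general nonnegative matrices: with $d=2$ and $A$, $B$ the diagonal $2\times 2$ matrices with diagonals $(2,1)$ and $(\tfrac12,1)$ one gets $\per(A\otimes B)=1$ while $\per A\,(\per B)^{2}+(\per A)^{2}\per B-\per A\,\per B=\tfrac32$. The paper's counting argument, and equivalently your $T_1$/$T_2$ computation, do go through when $A$ and $B$ are $(0,1)$-matrices (then $a_{\alpha^i}^{\,n_2}=a_{\alpha^i}$ and the permanent equals the number of positive diagonals); outside that setting neither version of the argument is valid.
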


\begin{proof}
By the definition, the Kronecker product $C = A \otimes B$ has entries $c_{\gamma} = a_{\alpha} b_{\beta}$, where $\gamma_i = (\alpha_i -1)n_2 + \beta_i$ for each $i = 1, \ldots, d$.  

Let  us fix a positive diagonal $D_A = \{ \alpha^1, \ldots, \alpha^{n_1}\}$ in the matrix $A$. To every collection of $n_1$ positive diagonals $D^1_B = \{ \beta^{1,1},\ldots,  \beta^{n_2,1} \}$, \ldots, $D^{n_1}_B = \{ \beta^{1,n_1}, \ldots, \beta^{n_2,n_1} \}$ of the matrix $B$,  we put into a correspondence a set of indices $D = \{ \gamma^1, \ldots, \gamma^{n_1 n_2} \}$ in the matrix $A \otimes B$, where for each $m \in \{ 1, \ldots, n_1n_2\}$, $m = (t- 1) n_2 + r$, $1 \leq r \leq n_2$, and $i \in \{ 1, \ldots, d\} $, we take  $\gamma^m_i = (\alpha^{t}_i -1)n_2 + \beta^{r,t}_i $.

It can be checked that the set $D$ is a diagonal in the Kronecker product $C$.
Since  $c_{\gamma^m} = a_{\alpha^t} b_{\beta^{r,t}}$ and all $a_{\alpha^t}$ and $b_{\beta^{r,t}}$ are nonzero, a diagonal $D$ is also positive. So we proved that $\per (A \otimes B) \geq \per A (\per B)^{n_1}$.

Acting similarly, one can deduce that $\per (A \otimes B) \geq (\per A)^{n_2} \per B$. Note that  these two constructions produce different diagonals except of diagonals $D = \{ \gamma^1, \ldots, \gamma^{n_1 n_2} \}$ such that  $\gamma^m_i = (\alpha^{t}_i -1)n_2 + \beta^{r}_i $ for some positive diagonals $D_A = \{ \alpha^1, \ldots, \alpha^{n_1}\}$  and $D_B = \{ \beta^{1},  \ldots, \beta^{n_2} \}$. Such diagonals $D$   arise exactly once  in both constructions. Therefore, 
$$\per (A \otimes B) \geq  \per A (\per B)^{n_1} +  (\per A)^{n_2} \per B - \per A \per B.$$
\end{proof}

For  the permanent of the projection of a nonnegative matrix we also have a simple lower bound.

\begin{teorema}
Let $A$ be a nonnegative $d$-dimensional matrix of order $n$ and $S \subseteq \{1, \ldots, d \}$. Then $\per P_S(A) \geq \per A$.
\end{teorema}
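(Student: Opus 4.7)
The plan is to expand $\per P_S(A)$ directly from the definitions and recognize $\per A$ as a nonnegative sub-sum of the expansion. Let $\ell = |S|$ and, for brevity, index an entry of $A$ as $a_{(\beta, j)}$ where $\beta \in I_n^{d-\ell}$ gives the components at positions outside $S$ and $j \in I_n^\ell$ gives the components at positions inside $S$. Then each entry of $P_S(A)$ is $\sum_{j \in I_n^\ell} a_{(\beta, j)}$, and expanding the product of sums in the definition of the permanent yields
$$\per P_S(A) = \sum_{D_\beta \in \mathcal{D}(P_S(A))} \sum_{(j^\beta)_{\beta \in D_\beta} \in (I_n^\ell)^n} \prod_{\beta \in D_\beta} a_{(\beta, j^\beta)}.$$

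The key observation is that every diagonal $D_A = \{\alpha^1, \ldots, \alpha^n\}$ of $A$ contributes precisely one term to this sum. Splitting each $\alpha^i$ as $(\beta^i, j^i)$, the set $\{\beta^1, \ldots, \beta^n\}$ is a diagonal of $P_S(A)$ (distinctness in every coordinate of $A$ implies distinctness in each non-$S$ coordinate), and the assignment $\beta^i \mapsto j^i$ is a valid choice in the inner sum, producing the term $\prod_{\alpha \in D_A} a_\alpha$. This gives a well-defined injection from $\mathcal{D}(A)$ into the index set of the expansion, whose image is exactly the set of pairs $(D_\beta,(j^\beta))$ for which, in each coordinate position belonging to $S$, the components of the $j^\beta$ are pairwise distinct across $\beta \in D_\beta$.

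Since $A$ is nonnegative, every term $\prod_{\beta} a_{(\beta, j^\beta)}$ in the expansion is nonnegative, so restricting to the image of this injection only decreases the total; the restricted sum equals exactly $\sum_{D_A \in \mathcal{D}(A)} \prod_{\alpha \in D_A} a_\alpha = \per A$, which gives the desired inequality $\per P_S(A) \geq \per A$.

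I do not anticipate a serious obstacle here; the statement reduces to the combinatorial fact that $\per A$ is a sub-sum of the expansion of $\per P_S(A)$, together with nonnegativity. The only delicate point is bookkeeping, namely verifying that the proposed map from $\mathcal{D}(A)$ into (diagonal, choice)-pairs is injective and lands in the expanded index set with the described image.
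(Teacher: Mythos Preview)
Your proof is correct and follows essentially the same approach as the paper: expand $\per P_S(A)$ by substituting the defining sum for each entry of $P_S(A)$, observe that every diagonal $D_A$ of $A$ contributes the term $\prod_{\alpha\in D_A}a_\alpha$ in this expansion via the pair (projected diagonal, choice of $S$-components), and conclude by nonnegativity of the remaining terms. Your treatment is slightly more explicit about the injectivity of the map $D_A\mapsto(D_\beta,(j^\beta))$, but the argument is the same.
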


\begin{proof}
Without loss of generality, we assume that $S = \{1, \ldots, \ell \} $. 
Denote $C = P_S(A)$. By the definitions of the permanent and the projection, we have
\begin{gather*}
\per C = \sum\limits_{D_C \in \mathcal{D}(C)} \prod\limits_{\gamma \in D_C} c_{\gamma} = 
\sum\limits_{D_C \in \mathcal{D}(C)} \prod\limits_{\gamma \in D_C} \left(\sum\limits_{i_1, \ldots, i_{\ell} = 1}^n a_{i_1, \ldots, i_\ell, \gamma} \right).
\end{gather*} 
To each diagonal $D_A \in \mathcal{D}(A)$, $D_A = \{ (i^1_1, \ldots,i^1_\ell, \gamma^1 ) , \ldots, (i^n_1, \ldots,i^n_\ell, \gamma^n ) \}$ we put into a correspondence a diagonal  $D_C = \{  \gamma^1  , \ldots,  \gamma^n  \}$. So in $\per C$  every $\prod\limits_{\alpha\in D_A} a_{\alpha}$ is contained as a summand. Since the matrix $A$ is nonnegative, all other summands in $\per C$ are also nonnegative. Therefore, $\per C = \per P_S(A) \geq \per A$.  
\end{proof}

Note that the permanent of a projection cannot be estimated from the above by the permanent of a matrix.

\begin{zam}
There exists nonnegative $d$-dimensional matrices $A$ of order $n$ such that $\per A = 0$ but $\per P_S (A) >0$.
\end{zam}

\begin{proof}
It is sufficient to consider a $d$-dimensional $(0,1)$-matrix $A$ of order $n$ such that $a_{\alpha} = 1$ if and only if  $\alpha$ belongs to a fixed hyperplane $\Gamma$ of the first direction (direction $(1, 0, \ldots, 0)$). Then the $1$-projection of $A$ is  the matrix $n J^{d-1}_n$, whose permanent is equal to $(n!)^{d-2}$. 
\end{proof}

For the contraction of matrices (if it does not coincide with the projection), it is also not possible to bound the permanent by means of the permanent of the initial matrix.

\begin{zam}
\item
\begin{enumerate}
\item There exists a $d$-dimensional nonnegative matrix $A$ of order $n$ such that $\per A = 0$ but $\per A_S > 0 $ for some $S \subseteq \{1, \ldots, d \}$, $|S| \geq 2$.
\item  There exists a $d$-dimensional nonnegative matrix $A$ of order $n$ such that $\per A > 0$ but $\per A_S = 0 $ for some $S \subseteq \{1, \ldots, d \}$, $|S| \geq 2$.
\end{enumerate}
\end{zam}

\begin{proof}
1. Consider a $d$-dimensional $(0,1)$-matrix $A$ of order $n$ such that $a_{\alpha} = 1$ if and only if  $\alpha$ belongs to a fixed hyperplane $\Gamma$ of direction $(1, 0, \ldots, 0)$. Then for every subset $S$ such that  $|S| = \ell \geq 2$ and  $1 \in S$, the $S$-contraction of $A$ is  the matrix $n  J^{d-\ell}_n$, whose permanent is equal to $(n!)^{d-\ell-1}$. 

2. Consider a $d$-dimensional $(0,1)$-matrix $A$, whose all ones are located at the diagonal 
$$D = \{(1, 2, \ldots, 2), (2, 3,  \ldots, 3), \ldots, (n, 1, \ldots, 1) \}.$$
Obviously, $\per A = 1$. On the other hand, for  every subset $S$ such that  $|S| = \ell \geq 2$ and $1 \in S$, the $S$-contraction of $A$ is the $(d - \ell)$-dimensional zero matrix, whose permanent is $0$. 
\end{proof}

At last, let us estimate the permanent of the dot product of multidimensional matrices. For $2$-dimensional matrices, a similar result was proved in~\cite{brualdi.perdirprod}.

\begin{teorema} \label{dotperm}
Let $A$ be a nonnegative $d_1$-dimensional matrix of order $n$ and $B$ be a nonnegative  $d_2$-dimensional matrix of order $n$. Then
$$\per (A B) \geq \per A \cdot \per B.$$
\end{teorema}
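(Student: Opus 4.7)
\medskip
\noindent\textbf{Proof proposal.} The plan is to imitate the two-dimensional argument that $\per(AB)\ge \per A\cdot\per B$: expand the permanent of $C=AB$ as a sum over diagonals, substitute the explicit formula $c_{\alpha\beta}=\sum_{i=1}^{n}a_{\alpha i}b_{i\beta}$ for each entry, use nonnegativity to throw away everything except the contributions indexed by permutations, and then reorganise the result as a double sum that factors as $\per A\cdot\per B$. The only genuinely new ingredient beyond the $2$-dimensional case is a careful bookkeeping of indices, because now each diagonal entry $\gamma$ of $C$ has the form $\gamma=\alpha\beta$ with $\alpha\in I_n^{d_1-1}$ and $\beta\in I_n^{d_2-1}$.

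First I would fix an ordering of each diagonal so that its first coordinate is the identity. Write a diagonal $D_C=\{\gamma^k\}_{k=1}^n$ of $C$ with $\gamma^k=\alpha^k\beta^k$, $\gamma^k_1=k$. Expanding,
\begin{equation*}
\prod_{k=1}^{n} c_{\gamma^k}=\sum_{f\colon[n]\to[n]}\prod_{k=1}^{n}a_{\alpha^k f(k)}\,b_{f(k)\beta^k}\;\ge\;\sum_{\pi\in S_n}\prod_{k=1}^{n}a_{\alpha^k \pi(k)}\,b_{\pi(k)\beta^k},
\end{equation*}
where the inequality uses nonnegativity to discard non-injective $f$. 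For each permutation $\pi$ the set $D_A(D_C,\pi):=\{\alpha^k\pi(k)\}_{k=1}^n$ is a diagonal of $A$ (its first $d_1-1$ coordinates already form permutations since $D_C$ is a diagonal, and the last coordinate equals $\pi$), and similarly $D_B(D_C,\pi):=\{\pi(k)\beta^k\}_{k=1}^n$ is a diagonal of $B$. Therefore, summing over diagonals of $C$ and switching the order of summation,
\begin{equation*}
\per C\;\ge\;\sum_{D_C\in\mathcal D(C)}\sum_{\pi\in S_n}\prod_{\alpha\in D_A(D_C,\pi)}a_\alpha\cdot\prod_{\beta\in D_B(D_C,\pi)}b_\beta.
\end{equation*}

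The key step — and the main obstacle — is to recognise this expression as precisely $\per A\cdot\per B$. To this end I would prove that the map $(D_C,\pi)\mapsto(D_A(D_C,\pi),D_B(D_C,\pi))$ is a bijection between the set of pairs $(D_C,\pi)\in\mathcal D(C)\times S_n$ and the set of pairs $(D_A,D_B)\in\mathcal D(A)\times\mathcal D(B)$. The inverse is constructed as follows: from $D_A=\{(k,\hat\alpha^k,\hat\imath_k)\}_{k=1}^n$ (in canonical form) read off the permutation $\pi(k):=\hat\imath_k$, then from $D_B=\{(\ell,\hat\beta^\ell)\}_{\ell=1}^n$ set $\gamma^k:=(k,\hat\alpha^k,\hat\beta^{\pi(k)})$; a direct check confirms $\{\gamma^k\}$ is a diagonal of $C$ (composing permutations with permutations stays a permutation) and that both composition orders are identity. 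Under this bijection the summand transforms exactly to $\prod_{\alpha\in D_A}a_\alpha\cdot\prod_{\beta\in D_B}b_\beta$, hence
\begin{equation*}
\per C\;\ge\;\sum_{D_A\in\mathcal D(A)}\sum_{D_B\in\mathcal D(B)}\prod_{\alpha\in D_A}a_\alpha\prod_{\beta\in D_B}b_\beta\;=\;\per A\cdot\per B,
\end{equation*}
which is the desired inequality. The subtle point throughout is that the first coordinate of $\beta^k$ inside $\gamma^k$ is a permutation of $k$, and this is exactly what lets $\pi$ play the double role of matching the last coordinate of the $A$-diagonal with the first coordinate of the $B$-diagonal.
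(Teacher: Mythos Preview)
Your proof is correct and follows essentially the same approach as the paper's: both arguments expand $\per(AB)$ via $c_{\alpha\beta}=\sum_i a_{\alpha i}b_{i\beta}$, identify the contribution of each pair $(D_A,D_B)$ of diagonals as a summand in that expansion, and invoke nonnegativity for the remaining terms. The paper states the correspondence $(D_A,D_B)\mapsto D_C$ in one sentence, while you spell it out as an explicit bijection $(D_C,\pi)\leftrightarrow(D_A,D_B)$; this is the same content with more bookkeeping made visible.
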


\begin{proof}
By the definition of the permanent,
$$\per A \cdot \per B = \left(\sum\limits_{D_A \in \mathcal{D}(A)}  \prod\limits_{\alpha \in D_A} a_{\alpha} \right)  \left(\sum\limits_{D_B \in \mathcal{D}(B)}  \prod\limits_{\beta \in D_B} b_{\beta} \right) =  \sum\limits_{D_A \in \mathcal{D}(A)}  \sum\limits_{D_B \in \mathcal{D}(B)}   \prod\limits_{\alpha \in D_A}   \prod\limits_{\beta \in D_B}  a_{\alpha} b_{\beta}.$$

On the other hand, from the definition of the dot product, for the matrix $C = AB$ we have
$$\per (AB) = \sum\limits_{D_C \in \mathcal{D} (C)} \prod\limits_{\gamma \in D_C} c_\gamma = \sum\limits_{D_C \in \mathcal{D} (C)} \prod\limits_{\gamma \in D_C}  \left(  \sum\limits_{i=1}^n a_{\overline{\alpha},i} b_{i,\overline{\beta}} \right),$$
where $\gamma = ( \overline{\alpha},  \overline{\beta} )$,  $\overline{\alpha} \in I_{n}^{d_1 -1}$,  and $\overline{\beta} \in I_{n}^{d_2-1}$.

 Let $D_A = \{  \alpha^1, \ldots, \alpha^n\}$ and  $D_B = \{  \beta^1, \ldots, \beta^n\}$ be a pair of diagonals in $A$ and $B$, respectively.   For every $j \in \{ 1, \ldots, n\}$, we present $\alpha^j $ as $ (\overline{\alpha^j}, j)$ and $\beta^j $ as $ (j, \overline{\beta^j})$.
Note that for each pair of diagonals $D_A$ and $D_B$ the summand $ \prod\limits_{\alpha \in D_A}   \prod\limits_{\beta \in D_B}  a_{\alpha} b_{\beta} $ in $\per A \cdot \per B$ appears in  the expansion of $\prod\limits_{\gamma \in D_C}  \left(  \sum\limits_{i=1}^n a_{\overline{\alpha},i} b_{i,\overline{\beta}} \right)$ in $\per (AB)$, where the diagonal $D_C = \{ \gamma^1, \ldots, \gamma^d\}$  is such that $\gamma^j = (\overline{\alpha^j}, \overline{\beta^j}) $ for all $j \in \{ 1, \ldots, n\}$. Since the matrices $A$ and $B$ are nonnegative, all other summands in $\per (AB)$ are also nonnegative. As a result,
$$\per (A B) \geq \per A \cdot \per B.$$
\end{proof}

As a corollary, we have the following inequality on the circle product of nonnegative matrices.

\begin{sled}
Let $A$ be a nonnegative $d_1$-dimensional matrix of order $n$ and $B$ be a nonnegative  $d_2$-dimensional matrix of order $n$. Then
$$\per (A \circ B) \geq \per A \cdot (\per B)^{d_1-1}.$$
\end{sled}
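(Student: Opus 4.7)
The plan is to unfold the definition of the circle product as an iterated dot product and then apply Theorem~\ref{dotperm} repeatedly. By definition,
$$A \circ B = (\cdots((A \cdot_{d_1, 1}B) \cdot_{d_1 - 1, 1}B) \cdots) \cdot_{2, 1}B,$$
so I would set $C_0 = A$ and $C_k = C_{k-1} \cdot_{d_1 - k +1,\, 1} B$ for $k = 1, \ldots, d_1 - 1$, so that $C_{d_1 - 1} = A \circ B$.

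The key observation is that Theorem~\ref{dotperm} states the inequality $\per(AB) \geq \per A \cdot \per B$ for the standard dot product, and the paper has already remarked (right after Proposition~\ref{dotprop}) that the properties of the dot product remain essentially the same for a general $(i,j)$-dot product. In particular, the proof of Theorem~\ref{dotperm} goes through verbatim for any $(i,j)$-dot product: one simply reinterprets the decomposition of an index in the product matrix and the internal summation variable, and the pairing between diagonals $(D_A, D_B)$ of the factors and a diagonal of the contracted outer product is unchanged. Thus for each $k$ we have
$$\per C_k \;\geq\; \per C_{k-1} \cdot \per B,$$
provided that $C_{k-1}$ is nonnegative, which is immediate since a dot product of nonnegative matrices is nonnegative.

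Iterating the inequality $d_1 - 1$ times yields
$$\per(A \circ B) \;=\; \per C_{d_1 - 1} \;\geq\; \per C_0 \cdot (\per B)^{d_1 - 1} \;=\; \per A \cdot (\per B)^{d_1 - 1},$$
as desired. The only non-routine point is the extension of Theorem~\ref{dotperm} from the $(d_1, 1)$-dot product to the general $(i,j)$-dot product used in the circle product definition, but since the proof of that theorem only relies on how a diagonal of the contracted outer product decomposes into a pair of diagonals of $A$ and $B$ together with nonnegativity, no genuine obstacle arises.
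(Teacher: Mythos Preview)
Your argument is correct and is exactly the intended one: the paper states the result as an immediate corollary of Theorem~\ref{dotperm}, and your iterated application of that theorem to the successive dot products defining $A \circ B$ is precisely how the implication goes. The only remark worth adding is that instead of reworking the proof of Theorem~\ref{dotperm} for a general $(i,j)$-dot product, you can observe that $A \cdot_{i,j} B$ is equivalent (by transposes of $A$ and $B$) to a standard dot product, and the permanent is invariant under transposes; this gives the needed extension without repeating any argument.
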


At last, we note that it is not possible to bound $\per (AB)$ by $\per A$ and $\per B$ from the above.

\begin{zam}
There exist nonnegative $2$-dimensional matrices $A$ and $B$ of order $n$ such that $\per A = \per B = 0$ but $\per (AB) >0$.
\end{zam}

\begin{proof}
Consider
$$
A = \left( 
\begin{array}{cccc}
1 & 0 & \cdots  & 0 \\
1 & 0 & \cdots &  0 \\
\vdots  & \vdots  & \ddots  & \vdots \\
1 & 0 & \cdots &  0 \\
\end{array}
\right) ;~~~
B = \left( 
\begin{array}{cccc}
1 & 1 & \cdots  & 1 \\
0 & 0 & \cdots &  0 \\
\vdots  & \vdots  & \ddots  & \vdots \\
0 & 0 & \cdots &  0 \\
\end{array}
\right).
$$
It is obvious that $\per A = \per B = 0$. On the other hand, $AB$ is the matrix $n J_n^2$, whose permanent is equal to $n!$. 
\end{proof}

\section*{Acknowledgements}
This work was funded by the Russian Science Foundation under grant No 22-21-00202, https:// rscf.ru/project/22-21-00202/.

\begin{bibdiv}
    \begin{biblist}[\normalsize]
    \bibselect{biblio}
    \end{biblist}
    \end{bibdiv}

\end{document}